\newcommand{\RR}{\ensuremath{\mathbb{R}}}
\newcommand{\CC}{\ensuremath{\mathbb{C}}}
\def\leng{{\rm lh}}
\def\dom{{\rm dom}}
\def\diam{{\rm diam}}
\newcommand{\pow}{\ensuremath{\mathscr{P}}}
\newcommand{\Bai}{\ensuremath{{}^\omega \omega}}
\newcommand{\seqo}{\ensuremath{{}^{<\omega} \omega}}
\newcommand{\Can}{\ensuremath{{}^\omega 2}}
\newcommand{\AD}{\ensuremath{{\rm \mathsf{AD}}}}
\newcommand{\ZF}{\ensuremath{{\rm \mathsf{ZF}}}}
\newcommand{\AC}{\ensuremath{{\rm \mathsf{AC}}}}
\newcommand{\ACOR}{\ensuremath{{\rm \mathsf{AC}_\omega(\mathbb{R})}}}
\newcommand{\conc}{{}^\smallfrown}
\newcommand{\imp}{\Rightarrow}
\newcommand{\fhi}{\varphi}
\newcommand{\bSigma}{\mathbf{\Sigma}}
\newcommand{\bPi}{\mathbf{\Pi}}
\newcommand{\bGamma}{\mathbf{\Gamma}}
\newcommand{\bDelta}{\mathbf{\Delta}}
\newcommand{\bN}{\mathbf{N}}
\newcommand{\F}{\mathcal{F}}
\newcommand{\G}{\mathcal{G}}
\newcommand{\B}{\mathcal{B}}
\newcommand{\D}{\mathcal{D}}
\newcommand{\Lip}{{\rm Lip}}
\newcommand{\restr}[2]{#1 \restriction  #2}
\newcommand{\seq}[2]{\langle #1 \mid #2 \rangle}
\newtheorem{theorem}{Theorem}[section]
\newtheorem{corollary}[theorem]{Corollary}
\newtheorem{proposition}[theorem]{Proposition}
\theoremstyle{definition}
\newtheorem{claim}{Claim}[theorem]
\newtheorem{remark}[theorem]{Remark}
\newtheorem{defin}{Definition}
\newtheorem{fact}{Fact}
\newcommand{\fullref}[1]{\ref{#1} on page\ \pageref{#1}}
\title{A NEW CHARACTERIZATION OF BAIRE CLASS $1$ FUNCTIONS}
\author{Luca Motto Ros}
\date{\today}
\address{Kurt G\"odel Research Center fo Mathematical Logic\\ University of Vienna \\ Austria}
\email{luca.mottoros@libero.it}
\thanks{Research partially supported by FWF Grant P 19898-N18.}
\keywords{Baire class 1 functions, Delta functions, Lipschitz
  functions}
\subjclass[2000]{03E15, 54H05}
\begin{document}

\maketitle

\begin{abstract}
We give a new characterization of the
Baire class $1$ functions (defined on an ultrametric space) by proving
that they are exactly the pointwise limits of sequences of full
functions (which
are particularly simple Lipschitz functions). Moreover we highlight
the link between the two classical stratifications of the
Borel functions by showing that the Baire class functions of some level
are exactly those
obtained as uniform limits of  sequences of Delta functions (of a
corresponding level).
\end{abstract}

\section{Introduction}

If $X$ and $Y$ are metrizable spaces, a function $f: X \to  Y$
is said to be \emph{continuous} if the preimage of an open set of $Y$
is open with respect to the topology of $X$, i.e.\ if $f^{-1}(U) \in
\bSigma^0_1(X)$ for every $U \in \bSigma^0_1(Y)$. There are two
natural generalizations of this definition, namely functions such that
$f^{-1}(U) \in \bSigma^0_{\xi+1}(X)$ for every $U$ open in $Y$ and
functions such that $f^{-1}(S) \in \bSigma^0_\xi(X)$ for every $S \in
\bSigma^0_\xi(Y)$ (for $\xi < \omega_1$): the former are called
\emph{Baire class functions (of level $\xi$)} while the latter are
called \emph{Delta
  functions (of level $\xi$)}. Each generalization provides
a stratification of the Borel functions from $X$ to $Y$, but if we
compare the levels of the two hierarchies, that is if we fix some
$\xi<\omega_1$ in the definitions above, they are quite different: for
example, each level of the Delta functions is closed under
composition, while no level of the Baire class functions (apart from
continuous functions) has such a
property.

The Baire class stratification was introduced by Baire in 1899 (with a
slightly different definition which, however, turns out to be equivalent
to the one
proposed here in the relevant cases) and has been
extensively studied. Of particular interest are the Baire class $1$
functions, i.e.\ those functions such that the preimage of an open set
is a $\bSigma^0_2$ set. For example, if $f:[0,1] \rightarrow \RR$ is
differentiable (at endpoints we take one-side derivatives), then its
derivative $f'$ is of Baire class $1$. Moreover, Baire class $1$
functions (in particular those from the Baire space $\Bai$ or from any
compact space $X$ to $\RR$)
have lots of applications in the theory of Banach spaces (for more on this
subject see, for example, \cite{hor}, \cite{keclou}, \cite{kiria},
\cite{solecki}, \cite{rosenthal} and references quoted there).

In this paper we  will give a new
characterization for the
Baire class $1$ functions defined from an ultrametric  space $X$
(such as the Baire space $\Bai$ or the Cantor space $\Can$) to any
separable metric space $Y$, by showing that they are exactly the
pointwise limits of sequences of full functions (which are particular
Lipschitz functions) between $X$ and
$Y$. Moreover we will show that the two hierarchies presented before are
intimately related by proving that a function is of level $\xi$ in the
Baire class stratification just in case it is the uniform limit of
functions of level $\xi+1$ in the Delta stratification. In particular,
this gives another characterization of the Baire class $1$
functions (taking $\xi=1$).\\

The paper is organized as follows. In Section
\ref{sectionpreliminaries} we give some (old and new) definitions and
state the main Theorems of the paper. In Section \ref{sectionlink} we
consider the relations between Baire class and Delta functions, while
in Section \ref{sectionzero} we prove some Theorems about
zero-dimensional and ultrametric spaces. The results of these two
Sections are partially implicit in some classical proofs, but we put them
here since we want to highlight the link between the two
stratifications of the Borel-functions and the special properties of
Borel-partitions of completely disconnected spaces. Finally, in Section
\ref{sectionmain} we give the proof of the new characterization of the
Baire class $1$ functions.

All the proofs need only a very small fragment of the Axiom of Choice,
namely Countable Choice over the Reals ($\ACOR$ for
short)\footnote{The fact that we
  will not use the full Axiom of Choice becomes relevant if one wants
  to assume other axioms which contradict $\AC$ (which however are, in
  general, consistent with $\ACOR$). For example, the Axiom of
  Determinacy $\AD$ is needed to carry out the
  Wadge's analysis of continuous reducibility, so it could be useful to
  check that our results hold also in that context.}.
It seems
not possible to avoid this (very weak) assumption since it is needed
even to prove very basic results in Descriptive Set Theory, e.g.\ to
prove that $\bSigma^0_2(\RR)$ is closed under countable unions. Hence
we will always work under $\ZF+\ACOR$. All the metrics $d$ considered
throughout the paper are always assumed to be such that $d \leq 1$. This
condition is needed for the proofs of some of the results, but it is not
a true limitation. In fact, given any metric $d$ on $X$, it is easy to see
that $d' = \frac{d}{1+d}$ is a metric on $X$ compatible with $d$ such that $d'
\leq 1$. Moreover, $d$ is an ultrametric if and only if $d'$ is an ultrametric,
and one can easily check that all the definitions given in this paper are
``invariant'' under such a transformation of the metric, e.g.\ $A \subseteq
X$ is a full set with respect to $d$ just in case it is a full set with respect
to $d'$ (although with different constants\footnote{In particular, one constant
can be obtained from the other one via the bijection $j : \RR^+ \to (0,1) :
r \mapsto \frac{r}{1+r}$.}). Thus all the results hold also when considering
\emph{arbitrary} (ultra)metrics.
Finally, given any two sets $A$ and $B$,
we will denote by ${}^A B$ the set of all the functions from $A$ to
$B$ and by ${}^{<\omega} A$ the set of all the \emph{finite} sequences
of elements from $A$. In particular, $\Bai$ (the set of all the
$\omega$-sequences of
natural numbers) will denote the Baire space (endowed with the usual
topology), while ${}^{<\omega}\omega$ will denote the set of all the
\emph{finite} sequences of natural numbers. For all the other
undefined concepts and symbols we will always refer the
reader to
the standard monograph \cite{kechris}.\\

Finally, it is the author's pleasure to aknowledge his debt to S{\l}awomir Solecki
for his review of the present work and for the suggestion of a
further generalization of the characterization previously obtained.

\section{Preliminaries and statement of the main results}
\label{sectionpreliminaries}

We start with a few of definitions and basic results, following
closely the presentation of
\cite{kechris}.

\begin{defin}
Let $X$, $Y$ be metrizable spaces and $\xi < \omega_1$ a nonzero
ordinal. A function $f:X \rightarrow Y$ is of \emph{Baire class
$1$} if $f^{-1}(U) \in \bSigma^0_2(X)$ for every open set $U
\subseteq Y$. Recursively, for $1< \xi < \omega_1$ we define now a
function $f:X \rightarrow Y$ to be of \emph{Baire class $\xi$} if
it is the pointwise limit of a sequence of functions $f_n:X
\rightarrow Y$, where $f_n$ is of Baire class $\xi_n<\xi$.

We denote by $\B_\xi(X,Y)$ the sef of Baire class $\xi$ functions
from $X$ into $Y$.
\end{defin}

A function $f$ which is of Baire class $\xi$ (for some nonzero
countable ordinal $\xi$) is called a \emph{Baire class function}.

\begin{defin}
Let $X$, $Y$ be metrizable spaces and let $\Gamma$ be some collection
of subsets of $X$. We say that $f:X \rightarrow Y$ is
\emph{$\Gamma$-measurable} if $f^{-1}(U) \in \Gamma$ for every
open set $U \subseteq Y$.
\end{defin}

The link between $\Gamma$-measurable and Baire class function is
given by the following classical Theorem.

\begin{theorem}[Lebesgue, Hausdorff, Banach]\label{theorbairemeasurable}
Let $X$, $Y$ be metrizable spaces, with $Y$ separable. Then for $1
\leq \xi <\omega_1$, $f:X \rightarrow Y$ is of Baire class $\xi$
if and only if $f$ is $\bSigma^0_{\xi+1}$-measurable.
\end{theorem}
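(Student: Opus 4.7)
The plan is to prove the equivalence by transfinite induction on $\xi$. The base case $\xi=1$ is immediate, since by the definition above a function is of Baire class $1$ exactly when it is $\bSigma^0_2$-measurable. Assume the result for every $\eta$ with $1\leq\eta<\xi$, and handle the two implications separately.

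For the forward direction, write $f=\lim_n f_n$ pointwise with each $f_n$ of Baire class $\xi_n<\xi$. By the inductive hypothesis each $f_n$ is $\bSigma^0_{\xi_n+1}$-measurable, hence $f_n^{-1}(F)\in\bPi^0_{\xi_n+1}(X)\subseteq\bPi^0_\xi(X)$ for every closed $F\subseteq Y$. Given an open $U\subseteq Y$, I would use metrizability of $Y$ to write $U=\bigcup_m\overline{V_m}$ with $V_m$ open and $\overline{V_m}\subseteq V_{m+1}\subseteq U$ (take for instance $V_m=\{y:d(y,Y\setminus U)>1/m\}$). The pointwise convergence $f_n(x)\to f(x)$ then forces
$$f^{-1}(U)=\bigcup_m\bigcup_N\bigcap_{n\geq N}f_n^{-1}(\overline{V_m}),$$
a countable union of countable intersections of sets in $\bPi^0_\xi(X)$, hence a set in $\bSigma^0_{\xi+1}(X)$.

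For the converse, assume $f$ is $\bSigma^0_{\xi+1}$-measurable; the goal is to realize $f$ as a pointwise limit of functions of Baire class strictly below $\xi$. Using separability of $Y$, fix a countable dense $\{y_k\}\subseteq Y$; for each $m$ the balls $B(y_k,1/m)$ cover $Y$, and the preimages $f^{-1}(B(y_k,1/m))\in\bSigma^0_{\xi+1}(X)$ cover $X$. Each such preimage decomposes as an increasing countable union of $\bPi^0_\xi$ sets in the successor case (and, in the limit case, as a countable union of sets of lower additive complexity). The plan is then to build, for each $m$, a simple function $f_m:X\to Y$ by combining the first $m$ layers of these decompositions, disjointifying via the reduction property of $\bSigma^0_{\xi+1}$, and sending each resulting piece to the corresponding $y_k$ (with a fixed default value assigned on the as-yet-unprocessed remainder). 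By construction $d(f_m(x),f(x))<1/m$ on the processed portion, which exhausts $X$ as $m\to\infty$, so $f_m\to f$ pointwise, while the disjointified level sets of each $f_m$ lie one strict Borel level below $\bSigma^0_{\xi+1}$, so each $f_m$ is of Baire class strictly less than $\xi$ by the inductive hypothesis.

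The main obstacle is precisely this drop in complexity: a straightforward simple-function approximation using the preimages $f^{-1}(B(y_k,1/m))$ as they stand would leave the level sets of $f_m$ in $\bSigma^0_{\xi+1}$, which via the inductive hypothesis only recovers Baire class $\xi$ and buys nothing. The gain must come from absorbing one layer of countable union implicit in $\bSigma^0_{\xi+1}$ into the outer pointwise-limit operation over $m$, so that the level sets of each individual $f_m$ actually live in $\bPi^0_\xi$ (and in $\bDelta^0_\xi$ after disjointification against the default value); when $\xi$ is a limit ordinal, the same idea applies with the minor bookkeeping of choosing a cofinal sequence $\eta_m<\xi$ and matching the $m$-th layer to the class $\eta_m$.
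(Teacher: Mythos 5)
First, a caveat on the comparison itself: the paper does not prove Theorem \ref{theorbairemeasurable} --- it is quoted as a classical result of Lebesgue--Hausdorff--Banach with a pointer to Kechris --- so your proposal has to be judged on its own. Your forward direction is fine and essentially standard: writing $U=\bigcup_m \overline{V_m}$ with $\overline{V_m}\subseteq V_{m+1}\subseteq U$ and using $f^{-1}(U)=\bigcup_m\bigcup_N\bigcap_{n\geq N}f_n^{-1}(\overline{V_m})$, together with the count $f_n^{-1}(\overline{V_m})\in\bPi^0_{\xi_n+1}\subseteq\bPi^0_\xi$, is exactly the argument the paper itself uses (for a closely related statement) in the proof of the proposition following Theorem \ref{theorBaireDelta}.

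The converse direction, however, has a genuine gap at the one step that carries all the weight, and it is precisely the step you flag as ``the main obstacle.'' A $\bSigma^0_{\xi+1}$ set is a countable union of $\bPi^0_\xi$ sets --- not of sets of class strictly below $\xi$ --- so the ``layers'' you extract from $f^{-1}(B(y_k,1/m))$ live in $\bPi^0_\xi$, and after disjointification (set differences against other layers and the default piece) they are differences of $\bPi^0_\xi$ sets, hence in $\bDelta^0_{\xi+1}$ but in general \emph{not} in $\bSigma^0_\xi$ or $\bDelta^0_\xi$ (already for $\xi=2$: a $G_\delta$ set, or a difference of two $G_\delta$ sets, need not be $F_\sigma$). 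For a countably-valued $f_m$ to be of Baire class $\nu<\xi$ via the inductive hypothesis, its level sets must lie in $\bSigma^0_{\nu+1}\subseteq\bSigma^0_\xi$; with level sets only in $\bDelta^0_{\xi+1}$ you get exactly the ``Baire class $\xi$, buys nothing'' outcome you were trying to avoid. Your parenthetical claim that disjointification lands the pieces in $\bDelta^0_\xi$ is where the argument breaks; genuine $\bPi^0_\xi$- or $\bDelta^0_\xi$-partitions refining a $\bSigma^0_{\xi+1}$ cover are available only under extra hypotheses such as zero-dimensionality of $X$ (this is the content of Section \ref{sectionzero}; for $X=\RR$ it fails already at $\xi=1$). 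The repair requires absorbing a \emph{second} layer of complexity: each $\bPi^0_\xi$ layer is a countable intersection of $\bSigma^0_{<\xi}$ sets, and the approximants must use finite sub-intersections of these, interleaving the exhaustion of the outer unions with the deepening of the inner intersections as $m$ grows (together with a closure-under-uniform-limits lemma for the class of pointwise limits of lower Baire classes). This interleaving is the real content of the classical proof, and, in the special case $\xi=1$ on ultrametric spaces, of the $\bSigma^0_2$-control-function machinery of Section \ref{sectionmain}. The limit-ordinal remark suffers from the same problem, since the layers are still $\bPi^0_\xi$ and cannot simply be ``matched'' to classes $\eta_m<\xi$.
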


By analogy with respect to this Theorem, we say that a function $f$
between two metrizable spaces is of \emph{Baire class 0} if and
only if it is $\bSigma^0_1$-measurable, i.e.\ if and only if $f$
is continuous.\\

As a consequence of this Theorem, if $X$ and $Y$ are metrizable
spaces and $Y$ is separable, then the Baire class $\xi$ functions
provide a stratification in $\omega_1$ levels of all the Borel
functions, i.e.\ functions such that $f^{-1}(U)$ is Borel for any
$U \in \bSigma^0_1(Y)$ (Borel-measurable functions). In
fact for every nonzero countable $\xi$ and every $f \in
\B_\xi(X,Y)$, $f$ is clearly Borel. Conversely, let $U_n$ be a
countable basis for the topology of $Y$ and let $f$ be Borel. Let
$\mu_n$ be nonzero countable ordinals such that $f^{-1}(U_n) \in
\bSigma^0_{\mu_n}$ and let $\xi = \sup\{\mu_n \mid n \in \omega \}$
(which is again a nonzero countable ordinal). Since
$\bSigma^0_\xi$ is closed under countable unions and  $f^{-1}(U_n) \in
\bSigma^0_\xi$ for every $n
\in \omega$, we have that $f
\in \B_\xi(X,Y)$. Note also that any $\B_\xi(X,X)$ is not closed
under composition since, in general, if $f \in \B_\mu(X,Y)$ and $g
\in \B_\nu(Y,Z)$ then $g \circ f \in \B_{\mu+\nu}(X,Z)$. This
result follows from the fact that if $A \in \bSigma^0_{\nu}(Y)$
and $f \in \B_\mu(X,Y)$ then $f^{-1}(A) \in \bSigma^0_{\mu+\nu}$.

The following is another classical fact\footnote{In general, if $X$
  and $Y$ are metrizable with $Y$ separable and $f: X \rightarrow
  Y$ is the pointwise limit of a sequence of continuous functions then
  $f$ is of Baire class $1$. Nevertheless the converse fails in the
general case: for
a counterexample, simply take $X = \RR$ and $Y = \{0,1\}$ (with the
discrete metric)
and consider the function such that $f(0)=1$ and $f(x)=0$ for every $x
\neq 0$.}\label{note}.

\begin{theorem}[Lebesgue, Hausdorff, Banach]
\label{theorclassical} Let $X$, $Y$ be separable metrizable.
Moreover, assume that either $X$ is zero-dimensional or $Y = \RR^n$ for
some $n \in \omega$ (or even $Y=\CC^m$ or $Y = [0,1]^m$ for some
$m \in \omega$). Then $f:X \rightarrow Y$ is of Baire class $1$ if
and only if $f$ is the pointwise limit of a sequence of continuous
functions.
\end{theorem}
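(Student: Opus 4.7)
The plan is to prove the two implications separately; the forward direction (pointwise limit of continuous $\Rightarrow$ Baire $1$) already holds in much greater generality, as the preceding footnote indicates, so the geometric hypothesis on $X$ or $Y$ is really needed only for the converse.

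\textbf{Forward direction.} Given $f_n \to f$ pointwise with each $f_n$ continuous and $U \subseteq Y$ open, I exploit the metric on $Y$ to write $U = \bigcup_m F_m$ with $F_m = \{y : d(y, Y \setminus U) \geq 1/m\}$ closed and $F_m \subseteq \mathrm{int}(F_{m+1})$. One then checks that
\[
f^{-1}(U) = \bigcup_{m, N \in \omega} \bigcap_{n \geq N} f_n^{-1}(F_{m+1}):
\]
the inclusion $\subseteq$ holds because $f(x) \in F_m \subseteq \mathrm{int}(F_{m+1})$ forces $f_n(x) \in F_{m+1}$ eventually, while $\supseteq$ uses that $F_{m+1}$ is closed and contained in $U$. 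Each set in the inner intersection is closed, so $f^{-1}(U) \in \bSigma^0_2(X)$ and $f \in \B_1(X,Y)$.

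\textbf{Converse, $X$ zero-dimensional.} Fix a countable base $\{U_k\}_k$ of $Y$ and write $f^{-1}(U_k) = \bigcup_m C^k_m$ with each $C^k_m$ closed. Since $X$ is zero-dimensional and separable, every open subset of $X$ is a disjoint countable union of clopen sets, and any closed $C \subseteq X$ can be separated from a disjoint open set by a clopen set. Using this, I build recursively, for each $n$, a countable clopen partition $\{V^n_i\}_i$ of $X$ which refines its predecessor and is compatible with the sets $C^k_m$ for $k,m \leq n$; then I choose representatives $x^n_i \in V^n_i$ subject to the rule that if $V^n_i$ meets some such $C^k_m$ then $x^n_i$ must lie in that $C^k_m$. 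Setting $f_n(x) := f(x^n_i)$ for $x \in V^n_i$ produces a continuous (locally constant) function, and a direct verification gives $f_n(x) \to f(x)$ pointwise: for any basic open $U_k \ni f(x)$ and any $m$ with $x \in C^k_m$, the construction forces $f_n(x) \in U_k$ for all sufficiently large $n$.

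\textbf{Converse, $Y = \RR^m$ (or $\CC^m$ or $[0,1]^m$).} Working coordinate-wise reduces everything to $Y = \RR$, where the classical Lebesgue approach applies. For each $n$, the sets $f^{-1}((k/n, (k+2)/n))$ ($k \in \mathbb{Z}$) are $F_\sigma$ and form an open-preimage cover of $X$ of mesh $2/n$; combining them with a continuous partition of unity $\{\fhi^n_k\}$ subordinate to a locally finite open refinement yields continuous $f_n := \sum_k (k/n)\,\fhi^n_k$, and pointwise convergence is verified by again exploiting the $F_\sigma$ structure. The main obstacle in both converse arguments is the familiar phenomenon that a Baire $1$ function may be discontinuous on a dense set, so a naive ``sample or average'' construction gives convergence only on the (merely residual) set of continuity points of $f$; the explicit $F_\sigma$ decompositions of the preimages of open sets must be folded into the choice of sampling points (zero-dimensional case) or partition-of-unity weights ($\RR^m$ case) in order to secure convergence at every point.
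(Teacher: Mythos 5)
The paper itself gives no proof of this theorem --- it is quoted as a classical result of Lebesgue--Hausdorff--Banach (with \cite{kechris} as the reference) --- so the only internal point of comparison is the machinery of Section \ref{sectionmain}, which in effect reproves the zero-dimensional case. Your forward direction is correct and standard: the identity $f^{-1}(U)=\bigcup_{m,N}\bigcap_{n\geq N}f_n^{-1}(F_{m+1})$ with $F_m\subseteq\mathrm{int}(F_{m+1})$ is exactly the argument behind the footnote preceding the statement. The problems are in the two converse arguments.

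In the zero-dimensional case the rule ``if $V^n_i$ meets some $C^k_m$ then $x^n_i$ must lie in that $C^k_m$'' is in general unsatisfiable: the $C^k_m$ are closed but not clopen, so however fine the clopen partition is, a single piece $V^n_i$ can meet several pairwise disjoint sets $C^k_m$ (already a basic clopen set of $\Bai$ can meet two disjoint closed sets), and then no representative lies in all of them. If instead you pick $x^n_i$ in just one of the $C^k_m$ met by $V^n_i$, the convergence check collapses: the chosen $C^{k'}_{m'}$ need not contain $x$, so $f(x^n_i)$ can be far from $f(x)$ at infinitely many stages. This is precisely the difficulty that the paper's Theorem \ref{theorcarLipschitz} is engineered to overcome: there the targets form a \emph{nested} open scheme $\langle U_s\rangle$ on $Y$ (so the constraints being tracked are compatible), and the $\bSigma^0_2$-control/state functions implement a priority search that stabilizes, along each branch $x$, on an index whose $C^k_m$ actually contains $x$. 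Some such stabilization device (or the classical detour through $\bDelta^0_2$-partitions, clopen separation of disjoint closed sets, and the lemma that fast uniform limits of pointwise limits of continuous functions are again such limits) is unavoidable; your sketch names the obstacle in its final sentence but does not actually defeat it.

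The $Y=\RR$ case has an analogous flaw: a continuous partition of unity must be subordinate to an \emph{open} cover, and the cover by the $\bSigma^0_2$ sets $f^{-1}\bigl((k/n,(k+2)/n)\bigr)$ need not admit any open refinement, since the interiors of these preimages can fail to cover $X$ (take $f=\chi_{\mathbb{Q}}$ on $\RR$: both $f^{-1}((-1/4,1/4))$ and $f^{-1}((3/4,5/4))$ have empty interior). The classical Lebesgue argument instead reduces to characteristic functions of $\bDelta^0_2$ sets $A$ and approximates $\chi_A$ by Urysohn functions equal to $1$ on an increasing sequence of closed subsets of $A$ and $0$ on an increasing sequence of closed subsets of $X\setminus A$, then assembles these and invokes the uniform-limit lemma; that is where the $F_\sigma$ structure genuinely enters, not through a subordinate open cover.
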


Hence, under the hypotheses of this Theorem, $f \in \B_\xi(X,Y)$
if and only if it is the pointwise limit of a sequence of
functions in $\bigcup_{\nu<\xi}\B_\nu(X,Y)$, for all $\xi \geq 1$.\\

There is another stratification of the Borel functions (in the case
$Y$ separable) which is
important because, contrary to the case of Baire class functions,
every level is a set of functions closed under composition.

\begin{defin}
Let $X$, $Y$ be metrizable spaces and $\xi < \omega_1$ be a
nonzero ordinal. A function $f: X \rightarrow Y$ is a
\emph{$\bDelta^0_\xi$-function} ($\bDelta^0_\xi$ for short) if
$f^{-1}(A) \in \bSigma^0_\xi(X)$ for every $A \in
\bSigma^0_\xi(Y)$.

We denote by $\D_\xi(X,Y)$ the set of such functions.
\end{defin}

\begin{proposition} \label{propositiondelta02functions}
For $\xi > 1$ the following are equivalent\footnote{For $\xi = 1$ we
  have in general that $i) \iff ii) \imp iii) \iff iv) \iff v)$ but
  not $iii) \imp i)$ (in fact if $Y$ is connected we have that
  \emph{every} function $f$ satisfies $iii)$, but $f$ is a
  $\bDelta^0_1$-function if and
  only if $f$ is continuous). Nevertheless the Proposition remains
  true even for $\xi = 1$ if we require that $Y$ is zero-dimensional.}:
\begin{enumerate}[i)]
\item $f$ is $\bDelta^0_\xi$;
\item $f^{-1}(A) \in
  \bPi^0_\xi$ for every $A \in \bPi^0_\xi$;
\item $f^{-1}(A) \in
  \bDelta^0_\xi$ for every $A \in \bDelta^0_\xi$;
\item $f^{-1}(A) \in
  \bSigma^0_\xi$ for every $\nu < \xi$ and $A \in \bPi^0_\nu$;
\item $f^{-1}(A) \in
  \bDelta^0_\xi$ for every $\nu < \xi$ and $A \in \bSigma^0_\nu$.
\end{enumerate}
\end{proposition}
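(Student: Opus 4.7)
The plan is to close a cycle of implications $i) \imp ii) \imp iii) \imp v) \imp iv) \imp i)$. Most steps are essentially bookkeeping, using the fact that $f^{-1}$ commutes with complements, unions and intersections, together with the standard inclusions $\bSigma^0_\nu \subseteq \bDelta^0_\xi$ and $\bPi^0_\nu \subseteq \bDelta^0_\xi$ for $\nu < \xi$ (valid in every metrizable space). The one step that genuinely uses $\xi > 1$ is the last one, $iv) \imp i)$, so that is the part to be careful about.

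First I would observe that $i) \iff ii)$ is immediate: if $A \in \bPi^0_\xi(Y)$, then $Y \setminus A \in \bSigma^0_\xi(Y)$, and $f^{-1}(Y \setminus A) = X \setminus f^{-1}(A)$, so $i)$ transfers to the complementary classes. Combining $i)$ and $ii)$ gives $iii)$, because a $\bDelta^0_\xi$ set is both $\bSigma^0_\xi$ and $\bPi^0_\xi$, and preimages under $f$ will then be both $\bSigma^0_\xi$ and $\bPi^0_\xi$, i.e.\ in $\bDelta^0_\xi$. For $iii) \imp v)$, note that for every $\nu < \xi$ one has $\bSigma^0_\nu \subseteq \bSigma^0_\xi$ and $\bSigma^0_\nu \subseteq \bPi^0_{\nu+1} \subseteq \bPi^0_\xi$, hence $\bSigma^0_\nu \subseteq \bDelta^0_\xi$, so $iii)$ applies directly. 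Then $v) \imp iv)$ again by complementation: if $A \in \bPi^0_\nu$ with $\nu < \xi$, apply $v)$ to $Y \setminus A \in \bSigma^0_\nu$ to obtain $f^{-1}(A) \in \bDelta^0_\xi \subseteq \bSigma^0_\xi$.

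The interesting step is $iv) \imp i)$. Here I would use the hypothesis $\xi > 1$ to invoke the standard normal form for $\bSigma^0_\xi$ in a metrizable space: every $A \in \bSigma^0_\xi(Y)$ can be written as $A = \bigcup_{n \in \omega} A_n$ with $A_n \in \bPi^0_{\nu_n}$ for suitable $\nu_n < \xi$. Applying $iv)$ to each $A_n$ yields $f^{-1}(A_n) \in \bSigma^0_\xi(X)$, and then $f^{-1}(A) = \bigcup_n f^{-1}(A_n) \in \bSigma^0_\xi(X)$ by closure of $\bSigma^0_\xi$ under countable unions (which is exactly where $\ACOR$ is needed, to pick, uniformly in $n$, witnesses for the $A_n$ and their preimages).

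The main (very mild) obstacle is ensuring that the proof is honest about the role of $\xi > 1$: the normal form used in $iv) \imp i)$ and the inclusions $\bSigma^0_\nu \subseteq \bDelta^0_\xi$ used in $iii) \imp v)$ both quietly rely on being able to find a $\nu$ strictly between $0$ and $\xi$, which collapses when $\xi = 1$. This is precisely the failure highlighted in the footnote, so nothing more needs to be said beyond pointing out, at the relevant step, that $\xi > 1$ is what allows the argument to go through.
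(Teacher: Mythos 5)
Your proposal is correct and follows essentially the same route as the paper: the equivalences $i)\iff ii)\iff iii)$ via complementation and the inclusion $\bDelta^0_\xi = \bSigma^0_\xi\cap\bPi^0_\xi$, the easy inclusions $\bSigma^0_\nu\subseteq\bDelta^0_\xi$ for $iii)\imp v)\imp iv)$, and the normal form of $\bSigma^0_\xi$ sets as countable unions of $\bPi^0_{<\xi}$ sets (requiring $\xi>1$) for $iv)\imp i)$. The only minor quibble is that $iii)\imp v)$ does not actually fail at $\xi=1$ (the footnote locates the failure at $iii)\imp i)$), but this does not affect the proof for $\xi>1$.
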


\begin{proof}
Since $\bSigma^0_\xi$ is closed under countable union, it is easy
to see that $i) \iff iii)$. $i) \iff ii)$ is obvious, and also $iii)
\imp v)$ is trivial (since $\bSigma^0_\nu \subseteq
\bDelta^0_\xi$ for every $\nu < \xi$). $v) \imp iv)$ since
$\bDelta^0_\xi$ is closed under complementation and is contained by
definition in $\bSigma^0_\xi$. Finally, to see that $iv) \imp i)$
recall that, by definition, every
$\bSigma^0_\xi$ set $A$ can be written as a countable union of
$\bigcup_{\nu<\xi}\bPi^0_\nu$ sets.
\end{proof}

As in the case of Baire class functions, a function $f$ which is a
$\bDelta^0_\xi$-functions (for some nonzero
countable ordinal $\xi$) is called a \emph{Delta function}.

To observe that the Delta functions provide a stratification in
$\omega_1$ levels of all the Borel functions it is enough to observe
that every open set of $Y$ is  in $\bSigma^0_\xi(Y)$
for every nonzero countable
ordinal $\xi$ and every metrizable space $Y$ (and hence every Delta function is Borel) and that every
Baire class function is a Delta function. To see this, let $f \in
\B_\nu(X,Y)$ and let $\xi$ be the first additively closed ordinal
above $\nu$ (that is $\xi = \nu \cdot \omega$): we claim that $f$ is a
$\bDelta^0_\xi$-function. In fact, let $S \in \bSigma^0_\xi$: by
definition, $S = \bigcup_n P_n$, where each $P_n \in \bPi^0_{\mu_n}(Y)$
for some $\mu_n < \xi$. Since $f \in \B_\nu(X,Y)$ we have that
$Q_n = f^{-1}(P_n) \in \bPi_{\nu+\mu_n}(X)$ and hence $f^{-1}(S) =
\bigcup_n Q_n$ where each $Q_n$ is in $\bPi^0_{\nu+\mu_n}(X)$. Since
$\xi$ is additively closed and $\nu,\mu_n < \xi$ we have that
$\nu+\mu_n < \xi$ for every $n \in \omega$: therefore $f^{-1}(S) \in
\bSigma^0_\xi(X)$ by definition.

Moreover, using again the fact that $\bSigma^0_1(Y) \subseteq
\bSigma^0_\xi(Y)$, it is easy to check that $\D_{\xi+1}(X,Y)
\subseteq \B_\xi(X,Y)$.

\begin{defin}
Let $X$ and $Y$ be two metrizable spaces and let $\F \subseteq \G$
be two sets of functions from $X$ to $Y$. Then $\F$ is a
\emph{basis} for $\G$ just in case every function in $\G$  is the
\emph{uniform} limit of a sequence of functions in $\F$.
\end{defin}

We will prove in Section \ref{sectionlink} that each level of the
Delta functions
forms a basis for a corresponding level of the Baire class
functions. This result is essentially implicit in the proof of Theorem
\ref{theorbairemeasurable} (see \cite{kechris}), but we will reprove
it here for the sake of completeness.

\begin{theorem} \label{theorDelta0xi}
Let $(X,d_X)$, $(Y,d_Y)$ be two metric spaces and assume that $Y$
is also separable. A function $f:X \rightarrow Y$ is of Baire
class $\xi$ if and only if it is the \emph{uniform} limit  of a
sequence of $\bDelta^0_{\xi+1}$-functions.
\end{theorem}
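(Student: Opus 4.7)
The plan is to handle the two directions separately, with the nontrivial work concentrated in the ``only if'' direction.

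For the easy direction, I would use the inclusion $\D_{\xi+1}(X,Y) \subseteq \B_\xi(X,Y)$ observed just before Definition of basis, together with the standard fact that a uniform limit of Baire class $\xi$ functions is again Baire class $\xi$. The latter I would prove via Theorem \ref{theorbairemeasurable}: given $f_n \to f$ uniformly with each $f_n$ being $\bSigma^0_{\xi+1}$-measurable, I would write each open $U \subseteq Y$ as $U = \bigcup_k V_k$ with $V_k = \{y \in U : d_Y(y, Y \setminus U) \geq 1/k\}$, and express $f^{-1}(U)$ as a countable combination of preimages $f_n^{-1}(\{y : d_Y(y, Y\setminus U) > 1/(2k)\})$ indexed so that uniform closeness guarantees membership; this representation places $f^{-1}(U)$ in $\bSigma^0_{\xi+1}(X)$.

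For the main direction, suppose $f$ is Baire class $\xi$, i.e.\ $\bSigma^0_{\xi+1}$-measurable. Using separability of $Y$ and a dense sequence $\{y_k\}_{k \in \omega}$, I would, for each $n$, form the countable Borel partition of $Y$ into the sets $A^n_k = B(y_k, 1/n) \setminus \bigcup_{j<k} B(y_j, 1/n)$. Each $A^n_k$ is in $\bDelta^0_2(Y)$ as the difference of two open sets. I then set $f_n(x) = y_k$ whenever $x \in f^{-1}(A^n_k)$, which is well-defined since the $A^n_k$ partition $Y$; by construction, $d_Y(f(x), f_n(x)) < 1/n$ for every $x$, so $f_n \to f$ uniformly.

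The step requiring verification is that $f_n \in \D_{\xi+1}(X,Y)$. Since $A^n_k \in \bDelta^0_2(Y)$ and $f$ is $\bSigma^0_{\xi+1}$-measurable, the preimage $f^{-1}(A^n_k) = f^{-1}(B(y_k,1/n)) \setminus f^{-1}\bigl(\bigcup_{j<k} B(y_j, 1/n)\bigr)$ is the difference of two $\bSigma^0_{\xi+1}(X)$ sets, hence lies in $\bDelta^0_{\xi+1}(X)$. For any $A \in \bSigma^0_{\xi+1}(Y)$, the preimage
\[
f_n^{-1}(A) \;=\; \bigcup \{\, f^{-1}(A^n_k) \mid y_k \in A \,\}
\]
is therefore a countable union of $\bDelta^0_{\xi+1}(X)$ sets, and hence belongs to $\bSigma^0_{\xi+1}(X)$. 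By Proposition \ref{propositiondelta02functions} (clause i), this is exactly what is needed for $f_n$ to be $\bDelta^0_{\xi+1}$.

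The main obstacle is really bookkeeping: the separability of $Y$ and the fact that countable Borel partitions of $Y$ into small-diameter pieces can be realized inside $\bDelta^0_2(Y)$ are what make the construction work, and I would rely on $\ACOR$ implicitly to pick a dense sequence and to carry out the countable unions above. No deeper obstacle arises, and the proof is essentially a quantitative sharpening of the standard argument behind Theorem \ref{theorbairemeasurable}.
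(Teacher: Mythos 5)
There is a genuine gap in the main direction, at the step you flagged as ``requiring verification''. You assert that
\[
f^{-1}(A^n_k) \;=\; f^{-1}(B(y_k,1/n)) \setminus f^{-1}\Bigl(\bigcup_{j<k} B(y_j,1/n)\Bigr)
\]
is ``the difference of two $\bSigma^0_{\xi+1}(X)$ sets, hence lies in $\bDelta^0_{\xi+1}(X)$''. That inference is false: the difference of two $\bSigma^0_{\xi+1}$ sets is the intersection of a $\bSigma^0_{\xi+1}$ set with a $\bPi^0_{\xi+1}$ set, which in general lies only in $\bDelta^0_{\xi+2}$ and not in $\bSigma^0_{\xi+1} \cup \bPi^0_{\xi+1}$ (already for $\xi=1$: a difference of two $\bSigma^0_2$ sets need not be $\bDelta^0_2$, just as a difference of two open sets need not be $\bDelta^0_1$). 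The partition structure does not rescue the claim either, since the complement of one piece is then only a countable union of such differences. Consequently your argument only shows $f_n^{-1}(A) \in \bSigma^0_{\xi+2}(X)$ for $A \in \bSigma^0_{\xi+1}(Y)$, i.e.\ that $f_n$ is a $\bDelta^0_{\xi+2}$-function, which is one level too high for the theorem. The underlying problem is that you disjointify the cover \emph{in $Y$} by taking set differences, and $f$-preimages of the resulting $\bDelta^0_2(Y)$ pieces are not controlled at level $\xi+1$.

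The paper's proof (via Theorem \ref{theorBaireDelta}) is the same construction except at exactly this point: it keeps the \emph{overlapping} cover $S^k_n = f^{-1}(U^k_n) \in \bSigma^0_{\xi+1}(X)$ and disjointifies \emph{in $X$} using the generalized reduction property of $\bSigma^0_{\xi+1}$, obtaining pairwise disjoint $Q^k_n \in \bSigma^0_{\xi+1}(X)$ with $Q^k_n \subseteq S^k_n$ and $\bigcup_n Q^k_n = X$. Since the $Q^k_n$ partition $X$ and each complement is a countable union of the other $\bSigma^0_{\xi+1}$ pieces, each $Q^k_n$ is automatically $\bDelta^0_{\xi+1}$, and the locally constant function $f_k(x)=z^k_n$ for $x \in Q^k_n$ is then a genuine $\bDelta^0_{\xi+1}$-function by Proposition \ref{proppartitionDelta0xi}. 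Replacing your $A^n_k$-differences with this reduction step repairs the proof; everything else you wrote (the uniform estimate, the easy direction via closure of $\B_\xi$ under uniform limits and $\D_{\xi+1}(X,Y) \subseteq \B_\xi(X,Y)$) is fine.
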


\begin{corollary}
Let $(X,d_X)$, $(Y,d_Y)$ be two metric spaces and assume that $Y$
is also separable. A function $f:X \rightarrow Y$ is in $\B_1(X,Y)$ if
and only if it is the \emph{uniform} limit  of a sequence of
$\bDelta^0_2$-functions.
\end{corollary}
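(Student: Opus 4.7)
The plan is short: this Corollary is nothing other than the special case $\xi = 1$ of Theorem \ref{theorDelta0xi}, since $\bDelta^0_{\xi+1}$ becomes $\bDelta^0_2$ when $\xi = 1$. I would simply apply that Theorem verbatim; no additional argument is required.

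For pedagogical completeness, should one wish to isolate the $\xi=1$ case (it is, after all, the emphasized consequence), the strategy mirrors the one I expect to use in proving Theorem \ref{theorDelta0xi}. For the ``only if'' direction, given $f \in \B_1(X,Y)$ and $\varepsilon > 0$, I would use the separability of $Y$ to pick a countable cover $\{U_n\}_{n \in \omega}$ of $Y$ by open sets of diameter $\leq \varepsilon$. The preimages $f^{-1}(U_n)$ form a $\bSigma^0_2$-cover of $X$, which I would disjointify into a $\bSigma^0_2$-measurable partition $\{A_n\}_{n \in \omega}$; then, using $\ACOR$, I would pick $y_n \in U_n$ and set $g_\varepsilon(x) = y_n$ whenever $x \in A_n$. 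One checks that $d_Y(f(x), g_\varepsilon(x)) \leq \varepsilon$ uniformly in $x$, and that $g_\varepsilon \in \D_2(X,Y)$ because the preimage of any open subset of $Y$ under $g_\varepsilon$ is a union of some of the $A_n$'s, hence $\bSigma^0_2$. For the ``if'' direction, if $f$ is the uniform limit of a sequence in $\D_2(X,Y)$, a standard $\varepsilon/2^n$-approximation argument shows that $f^{-1}(U)$ is $\bSigma^0_2$ for every open $U \subseteq Y$, so by Theorem \ref{theorbairemeasurable} one has $f \in \B_1(X,Y)$.

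The main obstacle is not conceptual but organizational: the disjointification step and the explicit use of $\ACOR$ must be visible enough that the $\bDelta^0_2$-character of each approximant $g_\varepsilon$ is clear, and the $\varepsilon/2^n$ bookkeeping in the converse direction should be written carefully. Since, however, Theorem \ref{theorDelta0xi} already handles the general $\xi$, the cleanest presentation is simply to invoke it.
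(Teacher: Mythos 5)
Your proposal is correct and matches the paper exactly: the Corollary is stated without proof precisely because it is the case $\xi=1$ of Theorem \ref{theorDelta0xi}, which is in turn obtained from Theorem \ref{theorBaireDelta}. One small caution about your supplementary sketch: the disjointification of the cover $\{f^{-1}(U_n)\}_n$ must be carried out via the generalized reduction property of $\bSigma^0_2$ (as the paper does), since the naive differences $f^{-1}(U_n)\setminus\bigcup_{m<n}f^{-1}(U_m)$ need not be $\bSigma^0_2$.
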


From  Theorem  \ref{theorDelta0xi} we can also derive the
following Corollary. It can be seen as an extension of Theorem
\ref{theorclassical}: in that case it was proved (under stronger
hypotheses) that $f$ is of Baire class $1$ if and only if it is
the pointwise limit of a sequence of $\bDelta^0_1$-functions
(i.e.\ continuous functions). Here we prove the same result for
every level different from $1$ (under weaker hypotheses).

\begin{corollary} \label{corDelta0xi}
Let $X$, $Y$ be two metrizable spaces and assume that $Y$ is also
separable. Then for every  $1<\xi<\omega_1$, $f:X \rightarrow Y$
is of Baire class $\xi$ if and only if $f$ is the pointwise limit
of a sequence of $\bDelta^0_{\xi}$-functions.
\end{corollary}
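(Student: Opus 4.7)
The plan is to prove each direction separately, drawing on Theorem~\ref{theorDelta0xi} and Theorem~\ref{theorbairemeasurable}. Note that a direct application of Theorem~\ref{theorDelta0xi} to $f \in \B_\xi$ gives only a uniform approximation by $\bDelta^0_{\xi+1}$-functions, so the forward implication requires unfolding the recursive definition of $\B_\xi$ in order to expose Baire classes strictly below $\xi$ and then approximate these by $\bDelta^0_\xi$-functions. This is precisely why the hypothesis $\xi>1$ enters; the converse implication, in contrast, works unchanged also for $\xi=1$.

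For the converse direction, I would take $f=\lim_{n} g_n$ pointwise with $g_n \in \D_\xi(X,Y)$ and verify that $f$ is $\bSigma^0_{\xi+1}$-measurable, so that $f \in \B_\xi(X,Y)$ by Theorem~\ref{theorbairemeasurable}. Given any open $U \subseteq Y$, I would approximate $U$ from inside by the closed sets $F_m = \{y \in Y : d_Y(y, Y \setminus U) \geq 1/m\}$, so that $U = \bigcup_m F_m$. A routine triangle-inequality check yields the identity
\[
  f^{-1}(U) = \bigcup_m \bigcup_N \bigcap_{n \geq N} g_n^{-1}(F_m).
\]
Since $F_m \in \bPi^0_1 \subseteq \bPi^0_\xi(Y)$ and $g_n \in \D_\xi$, Proposition~\ref{propositiondelta02functions}(ii) places each $g_n^{-1}(F_m)$ in $\bPi^0_\xi(X)$, and the two countable unions around the $\bPi^0_\xi$ intersection push $f^{-1}(U)$ into $\bSigma^0_{\xi+1}(X)$, as required.

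For the forward direction, I would unfold the definition: $f = \lim_n f_n$ pointwise with $f_n \in \B_{\nu_n}$ for some $\nu_n < \xi$. Applying Theorem~\ref{theorDelta0xi} to each $f_n$ with $\nu_n \geq 1$ (and simply taking $g_n = f_n$ when $\nu_n = 0$), I pick $g_n \in \D_{\nu_n+1}(X,Y)$ uniformly $1/n$-close to $f_n$; the triangle inequality then gives $g_n \to f$ pointwise. Since $\nu_n < \xi$, one has $\nu_n + 1 \leq \xi$ (strict whenever $\xi$ is a limit), so it remains only to show that $\D_{\nu_n+1}(X,Y) \subseteq \D_\xi(X,Y)$. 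I expect this monotonicity of the Delta hierarchy, namely $\D_\mu \subseteq \D_\xi$ for $\mu \leq \xi$, to be the main obstacle, since it is not recorded explicitly in the preliminaries. I would establish it by a short transfinite induction on $\xi$: given $h \in \D_\mu$ and $A = \bigcup_k P_k \in \bSigma^0_\xi$ with $P_k \in \bPi^0_{\alpha_k}$ and $\alpha_k < \xi$, I split on whether $\alpha_k < \mu$ (handled directly via $\bPi^0_{\alpha_k} \subseteq \bPi^0_\mu$ and Proposition~\ref{propositiondelta02functions}(ii)) or $\mu \leq \alpha_k < \xi$ (handled by the induction hypothesis, which upgrades $h$ to a member of $\D_{\alpha_k}$, so that $h^{-1}(P_k) \in \bPi^0_{\alpha_k}$). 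Taking the countable union puts $h^{-1}(A)$ in $\bSigma^0_\xi$, completing the inclusion and the argument.
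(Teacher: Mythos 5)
Your proof is correct and follows essentially the same route as the paper's: the converse direction rests on the identity $f^{-1}(U)=\bigcup_m\bigcup_N\bigcap_{n\ge N}g_n^{-1}(F_m)$ with closed sets exhausting $U$ from inside (the paper uses closed balls inside an open sphere, you use $F_m=\{y\mid d_Y(y,Y\setminus U)\ge 1/m\}$ for arbitrary open $U$ --- an immaterial variation), and the forward direction unfolds the recursive definition of $\B_\xi$, applies Theorem \ref{theorDelta0xi} to each $f_n\in\B_{\nu_n}$, and diagonalizes, exactly as in the paper. The only substantive addition is your explicit transfinite induction showing $\D_{\nu_n+1}(X,Y)\subseteq\D_\xi(X,Y)$, a monotonicity fact the paper invokes without proof (``since $\nu_n+1\le\xi$ \dots\ every $g_{n,m}$ is, in particular, a $\bDelta^0_\xi$-function''); your argument for it is sound.
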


By Theorem \ref{theorclassical}, as previously observed, Corollary
\ref{corDelta0xi} remains true in the case $\xi=1$ if we require
that $X$ is separable and either $X$ is  zero-dimensional or $Y$
is one of $\RR^n$, $[0,1]^n$ or $\CC^n$ (for some $n \in \omega$).\\

Finally, we want to give a new characterization of the Baire class $1$
functions. First recall the following Definition.

\begin{defin}
Let $(X,d_X)$, $(Y,d_Y)$ be two metric spaces. A function $f:X
\rightarrow Y$ is \emph{Lipschitz (with constant $L \in \RR^+$)} if \[ \forall
x,x' \in X (d_Y(f(x),f(x')) \leq L\cdot d_X(x,x')).\]

We denote by $\Lip(X,Y;L)$ the set of such functions and put $\Lip(X,Y) =
\bigcup_{L \in \RR^+}\Lip(X,Y;L)$.
\end{defin}

Let now $(X,d_X)$ be an
\emph{ultrametric space}, i.e.\ a  metric space such that $d_X$ is an
ultrametric. A set $A \subseteq X$
is \emph{full (with constant $r \in \RR^+$)} if
\[ \forall x \in A (B(x,r) \subseteq A),\]
were $B(x,r) = \{ y \in X \mid d_X(x,y) < r \}$ is the usual open
ball.

\begin{proposition}\label{propfullbasicproperties}
  Let $(X,d_X)$ be an ultrametric space. Then the full subsets of $X$
  form an algebra. Moreover, an arbitrary union of balls with a fixed
  radius is full (in particular, an arbitrary union of full sets with the
  same constant is full).
\end{proposition}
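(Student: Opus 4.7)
The plan is to unwind the definitions, leveraging the characteristic ultrametric dichotomy: for any $x,y \in X$ and $r \in \RR^+$, either $B(x,r) = B(y,r)$ or $B(x,r) \cap B(y,r) = \emptyset$; equivalently, $y \in B(x,r)$ implies $B(y,r) = B(x,r)$. I would also record the trivial monotonicity that if $A$ is full with constant $r$, then $A$ is also full with any smaller constant $r' \leq r$, since $B(y,r') \subseteq B(y,r) \subseteq A$ for every $y \in A$.

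For the claim that $A = \bigcup_{i \in I} B(x_i, r)$ is full with constant $r$: given $y \in A$, choose $i$ with $y \in B(x_i, r)$ and apply the dichotomy to obtain $B(y,r) = B(x_i, r) \subseteq A$. The parenthetical consequence, that an arbitrary union of full sets $A_i$ sharing a common constant $r$ is itself full with constant $r$, follows by rewriting each $A_i = \bigcup_{x \in A_i} B(x,r)$ and invoking the previous sentence.

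For the algebra assertion I would handle complement and finite union/intersection separately. If $A$ is full with constant $r$ and $y \notin A$, then any $z \in B(y,r) \cap A$ would force $y \in B(z,r) \subseteq A$ by fullness of $A$ and the symmetry of $d_X$, a contradiction; hence $B(y,r) \subseteq X \setminus A$, so $X \setminus A$ is full with the same constant $r$. For $A,B$ full with respective constants $r_A,r_B$, the monotonicity remark lets one pass to the common constant $r = \min(r_A,r_B)$, after which closure of the property ``full with constant $r$'' under binary union and binary intersection is an immediate check from the definition. The only subtlety in the whole argument is this reconciliation of fullness constants in the binary case; otherwise the proposition is a direct unfolding of the definition once the ultrametric dichotomy for balls of equal radius is in hand.
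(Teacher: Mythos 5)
Your proof is correct and follows essentially the same route as the paper's: unions of full sets are handled by passing to the minimum constant, the complement is handled by the same ball-containment contradiction, and the union-of-balls claim is verified via the ultrametric dichotomy that the paper leaves implicit with ``follows again from the properties of an ultrametric.'' The only cosmetic difference is that your complement argument invokes just the symmetry of $d_X$ (from $z \in B(y,r)$ infer $y \in B(z,r) \subseteq A$) rather than the ultrametric identity $B(y,r_A)=B(x,r_A)$ used in the paper, which if anything makes that particular step slightly more elementary.
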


\begin{proof}
  Let $A$ and $B$ be full sets with constants $r_A$ and $r_B$ respectively.
  Then it is easy to check that $A \cup B$ is full with constant $r =
  \min\{r_A,r_B\}$. Moreover, let $x \notin A$ and assume
  towards a contradiction that $y \in A$ for some $y \in B(x,r_A)$. By
  the properties of the ultrametric $d_X$, we have that $B(y,r_A) =
  B(x,r_A)$: but since $A$ is full with constant $r_A$, then $B(y,r_A)
  \subseteq A$ and hence $x \in A$, a contradiction! Thus $X \setminus A$ is
  full (with constant $r_A$). The second part follows again from the
  properties of an ultrametric.
\end{proof}

\begin{defin}\label{deffullfuntion}
  Let $(X,d_X)$ be an ultrametric space and $Y$ be any separable
  metrizable space. A function $f : X \to Y$ is said  to be \emph{full} if
  it has only finitely many values and the preimage of each of these
  values is a full set.

The function $f$ is said to be \emph{$\omega$-full} if it has at most
countably many values and there is some fixed $r \in \RR^+$ such that the
preimage of each value is a full set with constant $r$.
\end{defin}

It is clear that every full function is $\omega$-full. Moreover, if
$f$ is $\omega$-full and $r \in \RR^+$ witnesses this, then $f \in
\Lip(X,Y;r^{-1})$ (with respect to any metric $d_Y$ compatible with the
topology of $Y$ such that $d_Y \leq 1$). In fact, let $d_Y$ be such a
metric and let $x,x' \in X$:  if $d_X(x,x') \geq r$ then
\[d_Y(f(x),f(x')) \leq 1 = r^{-1} \cdot r \leq r^{-1} d_X(x,x'),\]
while if $d_X(x,x')<r$ then $x' \in f^{-1}(f(x))$ (since $x' \in
B(x,r)$) and hence $f(x) = f(x')$.

\begin{proposition}\label{propfullLipschitz}
  Let $(X,d_X)$ be an ultrametric space and $(Y,d_Y)$, $(Z,d_Z)$ be
  two  metric spaces. Let $f: X \to Y$ be a full
  function, $g \in \Lip(Y,Z;L)$ and $h \in \Lip(Z,X;L)$. Then $g \circ
  f$ is full and, if $d_Z$ is an ultrametric, also $f \circ h$ is
  full.

The same result holds if we sistematically replace ``full'' with
``$\omega$-full''.
\end{proposition}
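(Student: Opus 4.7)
The strategy in both parts is to reduce the claim to Proposition~\ref{propfullbasicproperties} by examining the preimages of the values of the composition. Throughout, one uses that the range of any composition $\psi \circ \varphi$ is contained in the image under $\psi$ of the range of $\varphi$, so cardinality conditions on the ranges transfer immediately: if $f$ has finitely (respectively at most countably) many values, so do $g \circ f$ and $f \circ h$.

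For the composition $g \circ f : X \to Z$, fix a value $z$ in the range; then $(g \circ f)^{-1}(z) = \bigcup \{f^{-1}(y) \mid y \in g^{-1}(z) \cap \mathrm{range}(f)\}$, which is a finite (resp.\ at most countable) union of full subsets of $X$. In the full case, Proposition~\ref{propfullbasicproperties} says the full sets form an algebra, so any finite union is full. In the $\omega$-full case, all the $f^{-1}(y)$ are full with a common constant $r$, so their union is again full with constant $r$ by the second part of Proposition~\ref{propfullbasicproperties}; since $r$ does not depend on the chosen value $z$, the resulting function is $\omega$-full.

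For the composition $f \circ h : Z \to Y$, assume $d_Z$ is an ultrametric. Given a value $y$ in the range, write $A_y = f^{-1}(y)$, which is full in $X$ with some constant $r_y$; in the $\omega$-full case we may take $r_y = r$ independent of $y$. The key computation is to show that $h^{-1}(A_y)$ is full in $Z$ with constant $r_y / L$. Indeed, if $z \in h^{-1}(A_y)$ and $z' \in B_Z(z, r_y/L)$, then the Lipschitz condition gives $d_X(h(z), h(z')) \leq L \cdot d_Z(z,z') < r_y$, so $h(z') \in B_X(h(z), r_y) \subseteq A_y$ because $A_y$ is full with constant $r_y$ and $h(z) \in A_y$. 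Hence $z' \in h^{-1}(A_y)$, proving fullness in $Z$ with constant $r_y/L$. In the $\omega$-full case this constant $r/L$ is again uniform across all values, yielding an $\omega$-full function.

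The only mildly delicate point is the $f \circ h$ case: one must remember that the hypothesis ``$d_Z$ is an ultrametric'' is essential, since fullness has been defined only in ultrametric spaces and the argument uses that $B_Z(z, r_y/L)$ is itself preserved under shifting its center — but this is built into the ultrametric property and does not actually re-enter the computation above, which only uses the Lipschitz condition. So no additional machinery is needed, and the proof amounts to bookkeeping with the constants as indicated.
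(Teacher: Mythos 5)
Your proof is correct and takes essentially the same route as the paper's: the preimage of each value of $g \circ f$ is written as a finite (resp.\ countable, same-constant) union of full sets and handled by Proposition~\ref{propfullbasicproperties}, while for $f \circ h$ the identical Lipschitz computation shows $h^{-1}(A)$ is full with constant $r L^{-1}$. Your closing observation that the ultrametric hypothesis on $d_Z$ enters only so that fullness is defined on $Z$, and not in the computation itself, is accurate.
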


\begin{proof}
  The first part is obvious, since for every $z \in Z$ the set $(g
  \circ f)^{-1}(z)$ is either empty or the union of finitely many full
  sets (and the cardinality of ${\rm range}(g \circ f)$ is less or
  equal than the cardinality of ${\rm range}(f)$). For the second
  part, it is enough to show that the preimage via $h$ of a full set $A
  \subseteq X$ (with constant $r$) is a full set (with constant $r \cdot
  L^{-1}$). In fact, let $z \in Z$ be such that $h(z) \in A$ and let $z' \in
  Z$ be such that $d_Z(z,z') < rL^{-1}$. Then $d_X(h(z),h(z')) \leq L
  \cdot d_Z(z,z') < L r L^{-1} = r$, and thus $h(z') \in A$. But this
  implies $B(z,rL^{-1}) \subseteq h^{-1}(A)$ and hence we are done. The case
  in which $f$ is $\omega$-full is proved in a similar way.
\end{proof}

Now we are ready to state the main Theorem of this paper.

\begin{theorem} \label{theorfull}
Let $(X,d_X)$ be an ultrametric space and let $Y$ be any separable metrizable
space. Then $f: X \rightarrow Y$ is of Baire
class $1$ if
and only if $f$ is the pointwise limit of a sequence of full
functions.
\end{theorem}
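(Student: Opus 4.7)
By (the proof of) Proposition~\ref{propfullbasicproperties}, in an ultrametric space the complement of a full set is again full with the same constant, so every full set is clopen. A full function, having finitely many values each with clopen preimage, is therefore continuous. Pointwise limits of continuous functions into the separable metric space $Y$ are of Baire class~$1$ (see the footnote on page~\pageref{note}), so this direction is immediate.

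\textbf{Hard direction, setup.} Let $f \in \B_1(X,Y)$ and fix a countable dense sequence $(y_k)_{k\in\omega}\subseteq Y$. By Theorem~\ref{theorbairemeasurable}, each $A_k^n := f^{-1}(B(y_k, 2^{-n}))$ lies in $\bSigma^0_2(X)$, hence is $F_\sigma$: write $A_k^n = \bigcup_{j\in\omega} F_{k,j}^n$ with $F_{k,j}^n$ closed and the union increasing in $j$. The central structural tool from the ultrametric assumption is the \emph{fullification}: for any $F\subseteq X$ and $r>0$, the set $F^{(r)} := \{x\in X : d_X(x,F)<r\}$ is a union of open $r$-balls, so by Proposition~\ref{propfullbasicproperties} it is full with constant $r$ and clopen; if $F$ is closed, then $F = \bigcap_n F^{(2^{-n})}$, so every closed set is a decreasing intersection of full-clopen sets.

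\textbf{Construction.} At stage $n$, pick a radius $r_n>0$ (with $r_n\to 0$) and form the finitely many full-clopen sets $C_{k,j}^n := (F_{k,j}^n)^{(r_n)}$ for $k,j\le n$, each full with the common constant $r_n$. Define
\[
   f_n(x) := y_{k_n(x)}, \qquad k_n(x) := \min\bigl\{k\le n : x \in \bigcup_{j\le n} C_{k,j}^n\bigr\},
\]
with $k_n(x):=0$ (say) if the above set is empty. Every preimage of $f_n$ is a Boolean combination of finitely many sets full with the common constant $r_n$, hence full with that constant by Proposition~\ref{propfullbasicproperties}; since only the values $y_0,\dots,y_n$ appear, each $f_n$ is a full function.

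\textbf{Convergence and main obstacle.} It remains to arrange that $f_n(x) \to f(x)$ for every $x\in X$. For each fixed $x$ and $\varepsilon>0$, density of $(y_k)$ produces a ``correct'' index $k^*$ with $d_Y(y_{k^*}, f(x))<\varepsilon$ and, for suitable $n$, $x\in A_{k^*}^n$; meanwhile for any ``wrong'' smaller $k'<k^*$ with $y_{k'}$ far from $f(x)$, we have $x\notin A_{k'}^n$, so $x$ lies at \emph{strictly positive} distance from each closed $F_{k',j}^n$, and for $r_n$ small enough the fullification $C_{k',j}^n$ excludes $x$. The \emph{main obstacle} is to coordinate the three cutoffs---the value-cutoff $k\le n$, the decomposition-cutoff $j\le n$, and the radius $r_n$---so that at each stage the finitely many ``wrong'' pairs $(k',j)$ are killed by the fullification \emph{uniformly enough} to ensure, for every $x$, that the correct index $k^*$ eventually wins. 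Since only finitely many offending pairs arise at each stage and each distance $d_X(x, F_{k',j}^n)$ is strictly positive when $x\notin F_{k',j}^n$, a sufficiently fast decay of $r_n$ together with a coherent enumeration of the decompositions $F_{k,j}^n$ across $n$ (so that a ``good'' pair $(k^*,j)$ keeps working at all sufficiently large $n$) secures pointwise convergence; this bookkeeping is the technical heart of the argument.
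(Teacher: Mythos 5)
The easy direction of your argument is fine: full sets are clopen (their complements are full with the same constant), so full functions are continuous, and the classical fact then applies. The hard direction, however, has a genuine gap located exactly where you yourself place ``the technical heart of the argument'': the pointwise convergence of the $f_n$. Fix $x$ and a ``wrong'' index $k'$, so that $y_{k'}$ is far from $f(x)$ and $x\notin A^n_{k'}$ for all large $n$. For a genuinely discontinuous $f$ it is unavoidable that some such $x$ lies in the \emph{closure} of $A^n_{k'}$; then each distance $d_X(x,F^n_{k',j})$ is strictly positive, but $\min_{j\le n}d_X(x,F^n_{k',j})$ tends to $0$ at a rate depending on $x$. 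Your construction needs $r_n<\min_{j\le n}d_X(x,F^n_{k',j})$ for all large $n$, simultaneously for uncountably many $x$ each with its own decay rate, and a single null sequence $(r_n)$ fixed in advance cannot in general be eventually below an uncountable family of null sequences; you give no argument that the Baire-class-$1$ structure makes this possible. It is instructive to compare with the classical proof that $\chi_D$ is a pointwise limit of continuous functions for $D\in\bDelta^0_2$: there one writes $D=\bigcup_jF_j$ and $D^c=\bigcup_jG_j$ and takes Urysohn functions \emph{separating} $F_j$ from $G_j$, so that every $x$ eventually lands in one of the two closed families and the behaviour of the approximants elsewhere is irrelevant. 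Fullness destroys precisely this escape route: two disjoint closed subsets of an ultrametric space (e.g.\ of $\Bai$) can be at distance $0$, and then no set full with constant $r$ can contain one and miss the other. So the coordination you defer is not bookkeeping; it is the content of the theorem, and it is why the paper instead embeds $X$ bi-Lipschitz into $\Bai$ and uses reductions to the $\bSigma^0_2$-complete set $S$ with the control/state-function machinery of Claim~\ref{claimmixing}, which for each $x$ yields a provably \emph{stabilizing} choice of index after a finite stage, the fullness constant $2^{-k}$ coming for free from $\leng(\phi(s))=\leng(s)$.

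There is also a smaller mismatch in your setup: since $A^n_k=f^{-1}(B(y_k,2^{-n}))$ shrinks with $n$, for fixed $x$ the least $k$ with $x\in A^n_k$ typically grows much faster than $n$, so at stage $n$ there may be no ``correct'' $k\le n$ available at all, and $k_n(x)$ is then determined entirely by spurious fullification captures (or defaults to $0$). This is repairable by decoupling the value index from the radius index, but it compounds the coordination problem above. If you want to salvage the direct approach, I would suggest either restricting to compact $X$ (where disjoint closed sets have positive distance, so a fullified Urysohn-type separation of the pieces of each preimage from the pieces of the complementary preimages is available), or following the paper's route through $\Bai$.
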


By the observations above and since every Lipschitz function is
uniformly continuous, we have also
the following
Corollary as a simple
consequence of Theorem \ref{theorfull}.

\begin{corollary}
Let $(X,d_X)$, $(Y,d_Y)$ be separable metric spaces and assume
that $X$ is an ultrametric  space with respect to $d_X$.
For every $f: X \rightarrow Y$ the following are equivalent:
\begin{enumerate}[i)]
\item $f$ is of Baire class $1$;
\item $f$
is the pointwise limit of a sequence of $\omega$-full functions;
\item $f$
is the pointwise limit of a sequence of Lipschitz functions;
\item $f$
is the pointwise limit of a sequence of uniformly continuous
functions.
\end{enumerate}
\end{corollary}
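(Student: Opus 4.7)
The plan is to prove the four conditions equivalent by a short cycle of implications, each of which is essentially already given by the results and observations preceding the statement.

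First, (i) $\Rightarrow$ (ii): by Theorem \ref{theorfull}, if $f$ is of Baire class $1$ then it is a pointwise limit of full functions, and since every full function is $\omega$-full (this is noted immediately after Definition \ref{deffullfuntion}) we are done.

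Next, (ii) $\Rightarrow$ (iii): the paragraph following Definition \ref{deffullfuntion} observes that if $g$ is $\omega$-full and $r \in \RR^+$ witnesses this, then $g \in \Lip(X,Y;r^{-1})$ with respect to any metric $d_Y \leq 1$ compatible with the topology of $Y$; recall that by the global convention all metrics are assumed to satisfy $d \leq 1$, so this applies. Hence a pointwise limit of $\omega$-full functions is a pointwise limit of Lipschitz functions.

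The implication (iii) $\Rightarrow$ (iv) is standard: every Lipschitz function is uniformly continuous (any $L \cdot \varepsilon$-close pair maps to an $\varepsilon$-close pair, using the Lipschitz constant). Finally, (iv) $\Rightarrow$ (i) uses the fact recalled in footnote \ref{note} after Theorem \ref{theorclassical}: whenever $X$ is metrizable and $Y$ is separable metrizable, any pointwise limit of continuous functions is of Baire class $1$. Since uniformly continuous functions are in particular continuous, this closes the cycle. No step presents a genuine obstacle; the corollary is a pure repackaging of Theorem \ref{theorfull} together with the observations on full/$\omega$-full/Lipschitz maps already recorded in this section.
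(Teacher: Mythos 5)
Your proof is correct and is exactly the argument the paper intends: the corollary is derived from Theorem \ref{theorfull} via the chain full $\imp$ $\omega$-full $\imp$ Lipschitz $\imp$ uniformly continuous $\imp$ continuous, closed off by the footnoted fact that a pointwise limit of continuous functions into a separable metrizable space is of Baire class $1$. (One cosmetic slip: for Lipschitz $\imp$ uniformly continuous the modulus is $\delta = \varepsilon/L$, i.e.\ $\varepsilon/L$-close pairs map to $\varepsilon$-close pairs, not $L\cdot\varepsilon$-close ones.)
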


The author first proved Theorem \ref{theorfull} but using Lipschitz
(in particular $\omega$-full) functions rather than full functions
(although the proof was essentially the same presented here in Section
\ref{sectionmain}):
the idea to generalize the result to the present form (as well as the
definition of fullness) is due to S.\ Solecki.

\section{The link between Baire class and Delta functions}\label{sectionlink}

We first give some basic definitions.

\begin{defin}
  Let $X$ be a topological space and $\Gamma \subseteq \pow(X)$ be any
  pointclass. A \emph{$\Gamma$-partition} of a set $C \in \Gamma$ is a
  family $\seq{C_n}{n<N}$ of nonempty pairwise disjoint sets of
  $\Gamma$ such that $C = \bigcup_{n<N} C_n$ and $1 \leq N \leq\omega$
  \footnote{For the rest of the paper we will always assume
without explicitly mentioning it that $N$ is some ordinal less or
equal to $\omega$.}.
\end{defin}

\begin{defin}
Let $X$, $Y$ be two metrizable spaces and let $\F$ be some set of
functions between $X$ and $Y$. Let $f:X \rightarrow Y$ be an
arbitrary function and $\langle C_n \mid n<N \rangle$  be some
partition of $X$.  We say that $f$ is
\emph{(locally) in $\F$ on the partition
  $\langle C_n \mid n<N
\rangle$} if there is a family of functions $\{f_n \mid n<N\} \subseteq
\F$ such that $\restr{f} {C_n}  = \restr{f_n}{C_n}$ for every $n<N$.

Moreover, if $\Gamma \subseteq \pow(X)$ is any pointclass, we will say
that $f$ is \emph{(locally) in $\F$ on a $\Gamma$-partition} if there
is some $\Gamma$-partition such that $f$ is locally in $\F$ on it.
\end{defin}

Obviously, if $\F$ and $\G$ are sets of functions
 and $\F \subseteq \G$, then if $f$ is
locally in $\F$ on the partition $\langle C_n \mid n<N \rangle$ we
have also that $f$ is locally in $\G$ on the same partition.

\begin{proposition} \label{proppartitionDelta0xi}
Let $X$, $Y$ be two metrizable spaces and $\xi$ be some nonzero countable
ordinal. Then every $f:X \rightarrow Y$ is in $\D_\xi(X,Y)$ if and only if
there is a $\bSigma^0_\xi$-partition of $X$ such that $f$ is locally
in $\D_\xi(X,Y)$ on it.
\end{proposition}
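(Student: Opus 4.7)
The statement is a routine consequence of the closure properties of $\bSigma^0_\xi$, so my plan is basically a bookkeeping argument in both directions.

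For the forward ($\Rightarrow$) direction, I would simply take the trivial partition $\langle X \rangle$ (with $N=1$). Since $X \in \bSigma^0_1(X) \subseteq \bSigma^0_\xi(X)$, this is a $\bSigma^0_\xi$-partition, and the witness family is $\{f\}$ itself (assuming $f \in \D_\xi(X,Y)$). So there is nothing to do.

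For the backward ($\Leftarrow$) direction, suppose $\langle C_n \mid n<N\rangle$ is a $\bSigma^0_\xi$-partition of $X$ and $\{f_n \mid n<N\} \subseteq \D_\xi(X,Y)$ witnesses that $f$ is locally in $\D_\xi(X,Y)$ on it, i.e.\ $\restr{f}{C_n} = \restr{f_n}{C_n}$ for every $n<N$. Given an arbitrary $A \in \bSigma^0_\xi(Y)$, I would write
\[
 f^{-1}(A) \; = \; \bigcup_{n<N} \bigl( f^{-1}(A) \cap C_n \bigr) \; = \; \bigcup_{n<N} \bigl( f_n^{-1}(A) \cap C_n \bigr),
\]
where the second equality uses that $f$ agrees with $f_n$ on $C_n$. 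Now $f_n^{-1}(A) \in \bSigma^0_\xi(X)$ because $f_n \in \D_\xi(X,Y)$, and $C_n \in \bSigma^0_\xi(X)$ by hypothesis on the partition. Since $\bSigma^0_\xi$ is closed under finite intersections, each $f_n^{-1}(A) \cap C_n \in \bSigma^0_\xi(X)$; and since $N \leq \omega$ and $\bSigma^0_\xi$ is closed under countable unions (this is where $\ACOR$ is invoked, as noted in the introduction), the whole union belongs to $\bSigma^0_\xi(X)$. Hence $f \in \D_\xi(X,Y)$.

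There is no genuine obstacle here: the only subtlety worth flagging is that one uses the countable-union closure of $\bSigma^0_\xi$ (which requires $\ACOR$) together with the trivial finite-intersection closure. No case split on whether $\xi=1$ or $\xi>1$ is needed, and the equivalent reformulations from Proposition \ref{propositiondelta02functions} are not invoked.
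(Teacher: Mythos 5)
Your proof is correct and is essentially identical to the paper's: the same trivial partition for the forward direction and the same decomposition $f^{-1}(A)=\bigcup_{n<N}(f_n^{-1}(A)\cap C_n)$ for the converse, concluding by closure of $\bSigma^0_\xi$ under finite intersections and countable unions. Nothing to add.
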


\begin{proof}
One direction is trivial, hence we have only to prove that if
$\langle C_n \mid n<N \rangle$ is a $\bSigma^0_\xi(X)$-partition on $X$
and $\{f_n \mid n<N\} \subseteq \D_\xi(X,Y)$ witnesses that $f$ is locally
in $\D_\xi(X,Y)$ on it, then $f \in \D_\xi(X,Y)$. To see this, let $S \in
\bSigma^0_\xi(Y)$.  Then $f^{-1}(S) = \bigcup_{n<N} (f^{-1}(S) \cap C_n) =
\bigcup_{n<N} (f^{-1}_n(S) \cap C_n)$: but $f_n^{-1}(S) \cap C_n \in
\bSigma^0_\xi(X)$ for every $n<N$ and therefore $f^{-1}(S) \in
\bSigma^0_\xi(X)$ (since
$\bSigma^0_\xi(X)$ is  closed under countable unions).
\end{proof}

We are now ready to prove a Theorem from which Theorem
\ref{theorDelta0xi} easily follows. We will use the following
standard fact.

\begin{fact}
Let $(X,d_X)$, $(Y,d_Y)$ be two metric spaces, $\xi$ be  a nonzero
countable ordinal and let $\langle f_n \mid n \in \omega \rangle$
be a sequence of functions from $\B_\xi(X,Y)$ converging uniformly
to some $f:X \rightarrow Y$. Then $f \in \B_\xi(X,Y)$ as well.
\end{fact}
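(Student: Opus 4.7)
The plan is to reduce the Fact to the statement that $\bSigma^0_{\xi+1}$-measurability is closed under uniform limits, appealing to Theorem \ref{theorbairemeasurable} to translate between ``$f \in \B_\xi$'' and ``$f$ is $\bSigma^0_{\xi+1}$-measurable''. (When $Y$ is separable this translation is immediate, and that is the setting in which the Fact is used below; for $\xi = 1$, $\bSigma^0_2$-measurability is anyway the definition of $\B_1$.) Once in the measurability framework, the proof is a direct computation on preimages of open sets.

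The first step is to extract a fast subsequence $g_n = f_{k_n}$ with $\sup_{x \in X} d_Y(g_n(x), f(x)) < 2^{-n-2}$ and, for any open $U \subseteq Y$, to exhaust $U$ by the open ``shrunken'' sets
\[ V_n = \{y \in Y : d_Y(y, Y \setminus U) > 2^{-n}\}, \]
which satisfy $V_n \subseteq V_{n+1}$ and $\bigcup_n V_n = U$. The crux is that the geometric shrinking rate of the $V_n$ is matched to the geometric uniform-convergence rate $2^{-n-2}$: two short triangle-inequality estimates yield the sandwich
\[ f^{-1}(V_n) \subseteq g_n^{-1}(V_{n+1}) \subseteq f^{-1}(V_{n+2}), \]
so that, taking countable unions and using $\bigcup_n V_n = \bigcup_n V_{n+2} = U$, one reads off the clean identity
\[ f^{-1}(U) = \bigcup_{n \in \omega} g_n^{-1}(V_{n+1}). \]

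Each $g_n \in \B_\xi(X,Y)$ is $\bSigma^0_{\xi+1}$-measurable, so every set $g_n^{-1}(V_{n+1})$ lies in $\bSigma^0_{\xi+1}(X)$; closure of $\bSigma^0_{\xi+1}$ under countable unions then gives $f^{-1}(U) \in \bSigma^0_{\xi+1}(X)$, and a second application of Theorem \ref{theorbairemeasurable} delivers $f \in \B_\xi(X,Y)$.

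The main obstacle is the calibration between the two geometric rates. Shrink the $V_n$ too slowly relative to the uniform convergence and the inclusion $f^{-1}(V_n) \subseteq g_n^{-1}(V_{n+1})$ fails; shrink them too quickly and either the opposite inclusion breaks or the $V_n$ cease to exhaust $U$. Matching the two rates, as above, makes both directions of the sandwich work simultaneously, after which the argument reduces to the standard closure of $\bSigma^0_{\xi+1}$ under countable unions.
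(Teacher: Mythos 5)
The paper states this as a standard Fact and supplies no proof of its own, so there is nothing to compare against; your argument is the classical one and it is correct. The calibration works: from $\sup_x d_Y(g_n(x),f(x))<2^{-n-2}$ the triangle inequality gives $d_Y(g_n(x),Y\setminus U)>2^{-n}-2^{-n-2}>2^{-n-1}$ whenever $f(x)\in V_n$, and $d_Y(f(x),Y\setminus U)>2^{-n-1}-2^{-n-2}=2^{-n-2}$ whenever $g_n(x)\in V_{n+1}$, so the sandwich and hence $f^{-1}(U)=\bigcup_n g_n^{-1}(V_{n+1})\in\bSigma^0_{\xi+1}(X)$ are exactly right. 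The one point worth keeping explicit is the caveat you already flag: the translation between $\bSigma^0_{\xi+1}$-measurability and membership in $\B_\xi(X,Y)$ via Theorem \ref{theorbairemeasurable} requires $Y$ separable when $\xi>1$, a hypothesis absent from the Fact as stated but present in every use the paper makes of it.
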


\begin{theorem}\label{theorBaireDelta}
Let $(X,d_X)$, $(Y,d_Y)$ be two metric spaces and assume that $Y$ is also
separable. Then for every function $f:X \rightarrow Y$ the following
conditions are equivalent (for\footnote{If $\xi=0$ then it is
  trivially true that $i) \iff iv) \iff v)$, but $ii)$ and $iii)$ are
  not equivalent to $i)$ unless $X$ is zero-dimensional.} $1\leq \xi
<\omega_1$):
\begin{enumerate}[i)]
\item $f$ is of Baire class $\xi$;
\item there is a sequence of functions $\langle f_k \mid k \in \omega
  \rangle$
  which converges uniformly to $f$ and such that every $f_k$ is
  locally constant on some $\bSigma^0_{\xi+1}$-partition;
\item there is a sequence of functions $\langle f_k \mid k \in \omega
  \rangle$
  which converges uniformly to $f$ and such that every $f_k$ is
  locally Lipschitz on some $\bSigma^0_{\xi+1}$-partition;
\item there is a sequence of functions $\langle f_k \mid k \in \omega
  \rangle$
  which converges uniformly to $f$ and such that every $f_k$ is
  locally continuous on some $\bSigma^0_{\xi+1}$-partition;
\item there is a sequence of functions $\langle f_k \mid k \in \omega
  \rangle$
  which converges uniformly to $f$ and such that every $f_k$ is
  a $\bDelta^0_{\xi+1}$-function.
\end{enumerate}
\end{theorem}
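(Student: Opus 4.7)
The plan is to prove the five conditions equivalent by going around the cycle (i) $\Rightarrow$ (ii) $\Rightarrow$ (iii) $\Rightarrow$ (iv) $\Rightarrow$ (v) $\Rightarrow$ (i). All arrows but the first are almost automatic. The implications (ii) $\Rightarrow$ (iii) and (iii) $\Rightarrow$ (iv) hold because a constant function is Lipschitz and a Lipschitz function is continuous, so the very same $f_k$ on the very same partition witnesses the next condition. For (iv) $\Rightarrow$ (v), note that a function locally continuous on a $\bSigma^0_{\xi+1}$-partition is \emph{a fortiori} locally in $\D_{\xi+1}(X,Y)$ on that partition (continuous functions being $\bDelta^0_1$-measurable and $\bDelta^0_1 \subseteq \bDelta^0_{\xi+1}$ for $\xi \geq 1$), and Proposition \ref{proppartitionDelta0xi} applied at level $\xi+1$ then yields $f_k \in \D_{\xi+1}(X,Y)$. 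For (v) $\Rightarrow$ (i), every $\bDelta^0_{\xi+1}$-function is in particular $\bSigma^0_{\xi+1}$-measurable, hence of Baire class $\xi$ by Theorem \ref{theorbairemeasurable}, and the Fact preceding the statement closes $\B_\xi(X,Y)$ under uniform limits.

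The genuine work lies in (i) $\Rightarrow$ (ii). Fix $f \in \B_\xi(X,Y)$ and $\epsilon > 0$. Using the separability of $Y$ I choose a countable dense set $\{y_n \mid n \in \omega\}$, set $V_n = B_Y(y_n,\epsilon)$, and let $A_n = f^{-1}(V_n)$; Theorem \ref{theorbairemeasurable} gives $A_n \in \bSigma^0_{\xi+1}(X)$ and clearly $\bigcup_n A_n = X$. The key observation is that $\bPi^0_\xi \subseteq \bDelta^0_{\xi+1}$ in any metric space (indeed $\bPi^0_\xi \subseteq \bPi^0_{\xi+1}$ by monotonicity, while $\bPi^0_\xi \subseteq \bSigma^0_{\xi+1}$ straight from the definition of $\bSigma^0_{\xi+1}$ as countable unions of $\bPi^0_\eta$'s with $\eta \leq \xi$), so each $A_n$ can be written as a countable union $\bigcup_m D_{n,m}$ with $D_{n,m} \in \bDelta^0_{\xi+1}$. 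I re-enumerate the double sequence $\{D_{n,m}\}$ as $\langle D_j \mid j \in \omega\rangle$, keeping a record $n(j)$ of the index $n$ with $D_j \subseteq A_{n(j)}$, and disjointify by $D_j^* = D_j \setminus \bigcup_{i<j} D_i$. Since $\bDelta^0_{\xi+1}$ is closed under Boolean operations, each $D_j^*$ remains in $\bDelta^0_{\xi+1} \subseteq \bSigma^0_{\xi+1}$, the $D_j^*$ are pairwise disjoint, and together they cover $X$. Setting $f_\epsilon$ to be constantly equal to $y_{n(j)}$ on each nonempty $D_j^*$ yields a function locally constant on this $\bSigma^0_{\xi+1}$-partition, satisfying $d_Y(f(x),f_\epsilon(x)) < \epsilon$ for every $x$ by construction. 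Running this with $\epsilon = 2^{-k}$ produces the desired sequence $\langle f_k \mid k \in \omega\rangle$.

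The only delicate point, and therefore the main obstacle, is the disjointification step. One cannot simply define $C_n = A_n \setminus \bigcup_{m<n} A_m$ and stay inside $\bSigma^0_{\xi+1}$: already for $\xi=1$ and $X=\RR$, the set $\RR \setminus \mathbb{Q}$ is $\bPi^0_2$ but not $\bSigma^0_2$. The detour through the Boolean algebra $\bDelta^0_{\xi+1}$ is precisely what rescues the argument in a general metric space, without having to assume that $X$ is zero-dimensional.
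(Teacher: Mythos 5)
Your proof is correct and follows essentially the same route as the paper's: the easy implications (ii)$\Rightarrow$(iii)$\Rightarrow$(iv)$\Rightarrow$(v)$\Rightarrow$(i) are handled identically, and for (i)$\Rightarrow$(ii) you likewise cover $Y$ by $\epsilon$-balls around a countable dense set and pull back to a countable $\bSigma^0_{\xi+1}$ cover of $X$. The only divergence is at the disjointification step, where the paper simply invokes the generalized reduction property of $\bSigma^0_{\xi+1}$, while you reprove the needed refinement by hand by passing through the algebra $\bDelta^0_{\xi+1}$; this is precisely the standard proof of that reduction property (and is the same argument the paper spells out in the Remark after the theorem when refining $\bSigma^0_{\xi+1}$-partitions to $2\text{-}\bPi^0_\xi$-partitions), and your warning that the naive differences $A_n \setminus \bigcup_{m<n} A_m$ would leave $\bSigma^0_{\xi+1}$ is well taken.
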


\begin{proof}
It is obvious that $\mathit{ii)} \imp \mathit{iii)}$ and
$\mathit{iii)} \imp \mathit{iv)}$, since every constant function is
Lipschitz and every Lipschitz function is also continuous. Moreover,
using Proposition \ref{proppartitionDelta0xi} and the fact that every
continuous function is $\bDelta^0_\xi$ (for every nonzero
$\xi<\omega_1$) we have also $\mathit{iv)} \imp \mathit{v)}$. Also
$\mathit{v)} \imp \mathit{i)}$ is easy: in fact
 every $\bDelta^0_{\xi+1}$-function is of Baire class $\xi$  and $\B_\xi(X,Y)$ is closed under uniform
limits by the Fact above.\\

Finally we prove $\mathit{i)} \imp \mathit{ii)}$. For every $k \in
\omega$, fix some open cover $\langle U^k_n\mid n < N \rangle$  of
$Y$ of mesh $2^{-k}$, that is a sequence of open sets such that $Y
= \bigcup_{n<N}U^k_n$ and $\diam(U^k_n) \leq 2^{-k}$ for every $n
< N$, and fix also a sequence $\langle z^k_n \mid  n<N \rangle$ of
points of $Y$ such that $z^k_n \in U^k_n$ for every $n<N$. One way
to do this is to consider an enumeration $\langle y_n \mid n<N
\rangle$ of some (at most) countable dense set of points of $Y$ (which
exists because $Y$ is separable) and to put
$U^k_n=B(y_n,2^{-(k+1)}) = \{y \in Y \mid d_Y(y,y_n)<2^{-(k+1)}\}$
and $z^k_n= y_n$. Let $S^k_n = f^{-1}(U^k_n)$. Clearly $\forall
n<N ( S^k_n \in \bSigma^0_{\xi+1}(X))$ (since $f \in \B_\xi(X,Y)$)
and the sets $\langle S^k_n \mid n<N \rangle$ cover $X$. Since
$\bSigma^0_{\xi+1}$ has the generalized reduction property, we can find
for every $k \in \omega$ a sequence $\seq{Q^k_n}{n<N}$ of
$\bSigma^0_{\xi+1}$ sets such that $Q^k_n \subseteq S^k_n$ for every
$n<N$, $Q^k_n \cap Q^k_m =\emptyset$ if $n \neq m$ and $\bigcup_{n<N}
Q^k_n = \bigcup_{n<N}S^k_n =X$ (thus, in particular, the $Q^k_n$'s form a
$\bSigma^0_{\xi+1}$-partition of $X$).

Now define  $f_k:X \rightarrow Y : x \mapsto z^k_n$, where $n<N$ is
the unique natural number such that  $x \in Q^k_n$.

Note that $f_k$ is locally
constant on $\langle Q^k_n \mid  n < N \rangle$. It remains only to
prove that the sequence
$\langle f_k \mid k \in \omega \rangle$ converges uniformly to $f$.
Clearly this follows from
\begin{claim}\label{claimuniform}
Fix some $k \in \omega$. Then for every $x \in X$ \[d(f_k(x),f(x))
\leq 2^{-k}.\]
\end{claim}

\begin{proof}[Proof of the Claim]
Fix some $x \in X$ and let $n$ be such that $x \in Q^k_n$ (so that, in
particular, $x
\in S^k_n$).  Then $f(x)
\in U^k_n$, and since $\diam(U^k_n) \leq 2^{-k}$ and $z^k_n \in
U^k_n$ we have that $d(f_k(x),f(x)) \leq 2^{-k}$.
\renewcommand{\qedsymbol}{$\square$ \textit{Claim}}
\end{proof}
\end{proof}

\begin{remark}
  The same result holds if we consider
$\bDelta^0_{\xi+1}$-partitions because it is easy to check that
every $\bSigma^0_{\xi+1}$-partition is actually a
$\bDelta^0_{\xi+1}$-partition (the converse is trivially
true). We can also consider partitions formed only by sets
which are difference of two $\bPi^0_\xi$ sets (i.e.\
$2\text{-}\bPi^0_\xi$ sets), since every $\bSigma^0_{\xi+1}$-partition
$\seq{S_n}{n<N}$ can be refined to a
$2\text{-}\bPi^0_{\xi}$-partition. In fact, since $S_n \in
\bSigma^0_{\xi+1}$, by definition there are $\langle
P_{n,m} \mid  m  \in
\omega, n<N \rangle$ such that $P_{n,m} \in \bPi^0_\xi$ and
$S_n =\bigcup_{m \in \omega} P_{n,m}$ for every $n<N$ (we are
not requiring that the sets $P_{n,m}$ are different for distinct
indexes $m$, hence we can suppose that $P_{n,m}$ is defined for
every $m\in \omega$).  Fix some
bijection $\langle \cdot, \cdot \rangle$ between $\omega \times
\omega$ and $\omega$ (for example $\langle i,j \rangle =
(2^i(2j+1))-1$) and let $R_{\langle n,m \rangle} =P_{n,m}$.
Inductively put $j_0 = 0$ and $j_{i+1}=\min\{ j \mid  j>j_i \wedge R_j
\setminus \bigcup_{l<j}R_l \neq \emptyset \}$ (in general the
sequence $j_i$ is defined for $i < I$ where $I \leq \omega$). Now
define
\[Q_i =
R_{j_i} \setminus \bigcup_{l<j_i}R_l = R_{j_i} \setminus
\bigcup_{l<i} Q_l\]
for every $i<I$. Clearly $\langle Q_i \mid i<I\rangle$ is an at most
countable partition
of $X$ (since the sets $S_n$ cover $X$) and refines
$\seq{S_n}{n<N}$. Moreover every $Q_i$ is the
difference of two $\bPi^0_\xi$
sets (being $\bPi^0_\xi$ is closed under finite unions).

Moreover, if $d_Y$ is a compact metric (e.g.\ it is  induced by
any metric on a compactification of $Y$),  the partitions above can
be taken to be finite.

Finally, as we will see in the next Section, if $X$ is
zero-dimensional we can strengthen the result a little bit by
taking $\bPi^0_\xi$-partitions (instead of
$\bSigma^0_{\xi+1}$-partitions) in conditions \textit{ii)-v)}.

\end{remark}

Now we restate Corollary \ref{corDelta0xi} in the following
(slightly) stronger form.

\begin{proposition}
Let $X$, $Y$ be two metrizable spaces and assume that $Y$ is also
separable. Then for every nonzero $\xi<\omega_1$, if there is a
sequence of $\bDelta^0_{\xi}$-functions pointwise converging to
$f$ then $f$ is of Baire class $\xi$.

Conversely, if $\xi>1$ and $f:X \rightarrow Y$ is of Baire class
$\xi$ then there is a sequence of $\bDelta^0_{\xi}$-functions
pointwise converging to $f$.

\end{proposition}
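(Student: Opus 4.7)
The plan is to prove the two implications separately: the first via Theorem \ref{theorbairemeasurable} together with item \textit{ii} of Proposition \ref{propositiondelta02functions}, and the second via Theorem \ref{theorDelta0xi} applied to the Baire-class approximants of $f$ supplied by the very definition of $\B_\xi$.

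For the ``if'' direction, suppose $\seq{f_n}{n \in \omega}$ is a sequence in $\D_\xi(X,Y)$ converging pointwise to $f$. I would show that $f$ is $\bSigma^0_{\xi+1}$-measurable, which by Theorem \ref{theorbairemeasurable} places it in $\B_\xi(X,Y)$. Fix an open $U \subseteq Y$ and set
\[
F_m = \{ y \in Y \mid d_Y(y, Y \setminus U) \geq (m+1)^{-1}\},
\]
so each $F_m$ is closed in $Y$ and $\bigcup_{m \in \omega} F_m = U$. Pointwise convergence $f_n(x) \to f(x)$, combined with the equivalence $f(x) \in U \iff d_Y(f(x), Y \setminus U) > 0$, yields
\[
f^{-1}(U) = \bigcup_{m \in \omega} \bigcup_{k \in \omega} \bigcap_{n \geq k} f_n^{-1}(F_m).
\]
By Proposition \ref{propositiondelta02functions} (item \textit{ii}, which remains valid at $\xi = 1$ since continuous preimages of closed sets are closed), each $f_n^{-1}(F_m)$ lies in $\bPi^0_\xi(X)$; countable intersections stay inside $\bPi^0_\xi(X)$, and the remaining two countable unions land in $\bSigma^0_{\xi+1}(X)$, as required.

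For the converse, assume $\xi > 1$ and $f \in \B_\xi(X,Y)$. By the definition of $\B_\xi$ one can write $f = \lim_k f_k$ pointwise with each $f_k$ of Baire class $\xi_k < \xi$. When $\xi_k \geq 1$, Theorem \ref{theorDelta0xi} furnishes a sequence of $\bDelta^0_{\xi_k + 1}$-functions converging uniformly to $f_k$; when $\xi_k = 0$, $f_k$ is already continuous and is therefore itself a $\bDelta^0_\eta$-function for every nonzero $\eta$. Since $\xi > 1$ forces $\xi_k + 1 \leq \xi$ in every case, and using the monotonicity $\D_\alpha(X,Y) \subseteq \D_\beta(X,Y)$ for $\alpha \leq \beta$, I can then pick $g_k \in \D_\xi(X,Y)$ with $\sup_{x \in X} d_Y(g_k(x), f_k(x)) \leq (k+1)^{-1}$, and the triangle inequality forces $g_k(x) \to f(x)$ for every $x \in X$.

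The only mildly delicate point, which I expect to be the main book-keeping obstacle, is precisely the monotonicity $\D_\alpha(X,Y) \subseteq \D_\beta(X,Y)$ used in the previous paragraph. I would verify it by induction on $\beta$: the successor step $\D_\beta \subseteq \D_{\beta+1}$ follows because every $\bSigma^0_{\beta+1}(Y)$ set is a countable union of $\bPi^0_\beta(Y)$ sets whose preimages, by item \textit{ii} of Proposition \ref{propositiondelta02functions}, lie in $\bPi^0_\beta(X) \subseteq \bSigma^0_{\beta+1}(X)$; at a limit $\beta$ the inductive hypothesis applied at each $\nu + 1 < \beta$, together with item \textit{iv} of the same Proposition, closes the argument.
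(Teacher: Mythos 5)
Your proof is correct and follows essentially the same route as the paper's: the forward direction expresses $f^{-1}(U)$ as $\bigcup_m\bigcup_k\bigcap_{n\geq k} f_n^{-1}(F_m)$ for closed sets $F_m$ exhausting $U$ (the paper uses closed balls inside an open ball, you use distance-to-complement sets, an immaterial difference), and the converse diagonalizes the uniform $\bDelta^0_{\xi_k+1}$-approximants supplied by Theorem \ref{theorDelta0xi}. Your explicit verification of the monotonicity $\D_\alpha(X,Y)\subseteq\D_\beta(X,Y)$ addresses a point the paper glosses over when it asserts that each $g_{n,m}$ is a $\bDelta^0_\xi$-function, so the extra care is welcome but does not change the argument.
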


\begin{proof}
Let $d$ be a compatible metric on $Y$. Recall that every open
sphere $U$ of $Y$ can be written as the union of countably many
closed sphere each of which is contained in the interior of the
following one. In fact let $U = B(y_0,\varepsilon) = \{ y \in Y
\mid d(y,y_0) < \varepsilon \}$ and let $\langle \varepsilon_m
\mid m \in \omega \rangle$ be a strictly increasing sequence of real such
that $\varepsilon_m < \varepsilon$ for every $m \in \omega$ and
$\lim_m \varepsilon_m = \varepsilon$: then $U = \bigcup_{m \in
\omega} \overline{B_m(y_0,\varepsilon_m)} = \bigcup_{m \in \omega}
\{ y \in Y \mid d(y,y_0) \leq \varepsilon_m\}$. Moreover, since
$\varepsilon_n < \varepsilon_{n+1}$, we have also $\overline{B_m}
= \overline{B(y_0,\varepsilon_m)} \subseteq B_{m+1} =
B(y_0,\varepsilon_{m+1})$, that is $U = \bigcup_{m \in \omega}
B_m$.

Now assume that $\langle f_k \mid k \in \omega \rangle$ is a
sequence of $\bDelta^0_\xi$-functions pointwise converging to $f$:
it is enough to prove that $f^{-1}(U) \in \bSigma^0_{\xi+1}(X)$
for every open sphere $U = \bigcup_m \overline{B_m} \subseteq Y$.
First note that
\[ f^{-1}(U) = \bigcup_{m \in \omega} \bigcup_{n \in \omega}
\bigcap_{k\geq n} f^{-1}_k(\overline{B_m}).\]
In fact, if $f(x)
\in U$ then there is an $m$ such that $f(x) \in B_m \subseteq \overline{B_m}$
and hence
also $f_k(x) \in B_m$ for any $k$ large enough (since $f_k$ converge
to $f$). For the other direction, if there is some $m$ such that
$f_k(x) \in \overline{B_m}$ for almost all $k$, thus also $f(x)$
(which is the limit of the points $f_k(x)$) must belong to the
same $\overline{B_m}$ (since it is closed).

Since each $f_k$ is a $\bDelta^0_\xi$-function and since
$\bPi^0_1(Y) \subseteq \bPi^0_\xi(Y)$ for every nonzero countable
$\xi$, we have that
$f^{-1}(\overline{B_m}) \in \bPi^0_\xi(X)$ for every $m \in \omega$
and hence also
$\bigcap_{k \geq n} f^{-1}_k(\overline{B_m}) \in \bPi^0_\xi(X)$
for every $n \in \omega$ (since $\bPi^0_\xi(X)$ is closed under
countable intersections). But then $f^{-1}(U)$ is a countable union of
$\bPi^0_\xi(X)$ sets, i.e.\ it is a $\bSigma^0_{\xi+1}(X)$ set and
we are done.\\

Conversely, if $\xi>1$ and $f$ is of Baire class $\xi$ then it is
the pointwise limit of some sequence $f_n$ of functions such that
for every $n \in \omega$ there is a $1\leq \nu_n < \xi$ such that $f_n$
is of Baire class $\nu_n$. Using Theorem \ref{theorDelta0xi}, find
for every $n \in \omega$ a sequence $g_{n,m}$ of
$\bDelta^0_{\nu_n+1}$-functions converging uniformly to $f_n$.
Note that by the construction above (Claim \ref{claimuniform}) we
can assume that $d(g_{n,m}(x),f_n(x)) \leq
2^{-m}$ for every $x \in X$. Moreover, since $\nu_n+1 \leq \xi$ we
have that every
$g_{n,m}$ is, in particular, a $\bDelta^0_{\xi}$-function. Take
any diagonal subsequence $\langle h_n \mid n \in \omega \rangle$ of
the $g_{n,m}$, e.g.\ $h_n = g_{n,n}$. It remains only to prove
that this sequence converges pointwise to $f$. To see this, fix
some $x \in X$ and $k \in \omega$. Let $j \in \omega$ be such that
\[ \forall i \geq j \; (d(f_i(x),f(x)) < 2^{-(k+1)})\] and put $m =
\max\{j,k+1\}$. Clearly, for every $m' \geq m$ we have
\begin{multline*}
d(h_{m'}(x),f(x))
\leq
d(g_{m',m'}(x),f_{m'}(x))+d(f_{m'}(x),f(x)) < \\
<2^{-m'} + 2^{-(k+1)} \leq 2 \cdot 2^{-(k+1)} = 2^{-k}.
\end{multline*}
\end{proof}

The same Corollary clearly holds if we consider functions which are
constant (respectively, Lipschitz,  continuous) on a \emph{finite}
$\bDelta^0_\xi$-partition.

\section{Zero dimensional spaces}\label{sectionzero}

We now prove some  Theorems on zero-dimensional and ultrametric
spaces. In particular, the first is a simple variation of some
classical results (see \cite{kechris}).
Let $s \in {}^{<\omega}\omega$ be a finite sequence of natural
numbers. We will denote the \emph{length of $s$} by $\leng(s)$
(formally, $\leng(s) = \dom(s)$).

\begin{theorem}\label{theorpolishspaces}
If $(X,d)$ is a metric, separable  and  zero-dimensional space,
then there is some set $A\subseteq \Bai$ and an
homeomorphism $h:A \rightarrow X$ such that $h \in \Lip(A,X;1)$
(with respect to $d$ and the usual metric $d'$ that $\Bai$ induces
on $A$).

If moreover $d$ is an ultrametric then $h$ can be taken
bi-Lipschitz, i.e.\ $h^{-1} \in \Lip(X,A;2)$ (and $h \in
\Lip(A,X;1)$ as before).

If $d$ is also complete then the set $A$ can be taken to be a
closed set.
\end{theorem}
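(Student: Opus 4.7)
The plan is to build a standard Lusin-type scheme of clopen subsets of $X$ indexed by the tree $\seqo$, and let it induce the desired homeomorphism. Recursively I construct a family $\seq{X_s}{s \in \seqo}$ of clopen subsets of $X$ such that $X_\emptyset = X$, $\seq{X_{s \conc n}}{n \in \omega}$ is a partition of $X_s$ (with some pieces possibly empty), and $\diam(X_s) \leq 2^{-\leng(s)}$. The nontrivial step is the partitioning: given clopen $X_s$ with the diameter bound, for every $x \in X_s$ zero-dimensionality provides a clopen neighborhood $U_x \subseteq X_s$ with $\diam(U_x) \leq 2^{-\leng(s)-1}$, separability of $X_s$ yields countably many such $U_i$ covering $X_s$, and disjointifying via $V_i = U_i \setminus \bigcup_{j<i} U_j$ gives the required countable clopen partition of $X_s$.

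With the scheme in hand, I set $A = \{\alpha \in \Bai \mid \forall n \; X_{\restr{\alpha}{n}} \neq \emptyset \text{ and } \bigcap_n X_{\restr{\alpha}{n}} \neq \emptyset\}$ and define $h(\alpha)$ to be the unique element of $\bigcap_n X_{\restr{\alpha}{n}}$ (uniqueness because the diameters vanish). The partition property delivers bijectivity, since each $x \in X$ determines a unique descending branch in the scheme. For the Lipschitz bound, if $\alpha \neq \beta$ first differ at position $n$, then $h(\alpha), h(\beta) \in X_{\restr{\alpha}{n}}$, so $d(h(\alpha),h(\beta)) \leq \diam(X_{\restr{\alpha}{n}}) \leq 2^{-n} = d'(\alpha,\beta)$, giving $h \in \Lip(A,X;1)$. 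Continuity of $h^{-1}$ is immediate from the identity $h^{-1}(X_s) = \{\alpha \in A \mid s \subseteq \alpha\}$ together with the fact that the $X_s$, having vanishing diameter, form a basis for the topology of $X$.

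For the ultrametric refinement I exploit the fact that in an ultrametric space the open balls of a fixed radius partition the ambient space. For $\leng(s) \geq 1$ I take $X_s$ itself to be an open ball of radius $2^{-\leng(s)}$, so the children $X_{s \conc n}$ may be chosen as the (countably many, by separability) open balls of radius $2^{-\leng(s)-1}$ into which $X_s$ decomposes. Distinct siblings $X_{s \conc n}, X_{s \conc m}$ are then distinct open balls of the same radius, hence any two points drawn from different siblings are at distance $\geq 2^{-\leng(s)-1}$. If $\alpha, \beta$ first differ at position $n$ this gives $d(h(\alpha),h(\beta)) \geq 2^{-(n+1)}$, so $d'(\alpha,\beta) = 2^{-n} \leq 2\, d(h(\alpha),h(\beta))$, i.e.\ $h^{-1} \in \Lip(X,A;2)$.

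For the complete case, a descending sequence of nonempty closed sets with vanishing diameter has nonempty intersection, so the second clause in the definition of $A$ becomes automatic and $A = \{\alpha \in \Bai \mid \forall n \; X_{\restr{\alpha}{n}} \neq \emptyset\}$. This set is closed in $\Bai$, because whenever $X_{\restr{\alpha}{n}} = \emptyset$ the entire basic neighborhood of $\alpha$ determined by $\restr{\alpha}{n}$ is disjoint from $A$. The main obstacle throughout is the careful bookkeeping of the scheme construction: the combination of zero-dimensionality and separability is exactly what yields a countable clopen partition with the diameter bound, and the switch to ball-partitions in the ultrametric setting is what upgrades the result to the bi-Lipschitz statement with the claimed constant $2$.
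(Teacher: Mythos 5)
Your proposal is correct and follows essentially the same route as the paper: a Lusin scheme of clopen sets with vanishing diameters for the general zero-dimensional case, upgraded in the ultrametric case to a scheme whose nodes are open balls of radius $2^{-\leng(s)}$, so that distinct siblings are disjoint balls of equal radius and hence uniformly separated, yielding the constant $2$ for $h^{-1}$. The only (cosmetic) difference is that the paper realizes the ball-partition by greedily enumerating centers from a countable dense set, whereas you invoke directly that the distinct balls of a fixed radius partitioning a set are countably many by separability.
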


\begin{proof}
The first part is a standard argument: one can construct a Lusin
scheme $\langle C_s \mid s \in {}^{<\omega}\omega\rangle$ on $X$ such that
\begin{enumerate}[i)]
\item $C_\emptyset = X$
\item $C_s$ is clopen
\item $C_s = \bigcup_{i \in \omega} C_{s \conc i}$
\item $\diam(C_s) \leq 2^{-\leng(s)}$.
\end{enumerate}
From this one can conclude that the induced map $f$ is defined on
 the set $A = \{y \in \Bai \mid \bigcap_n C_{\restr{y}{n}} \neq
 \emptyset \}$ and is an homeomorphism. But condition
\textit{iv)} implies also $f \in \Lip(A,X;1)$. In fact, for every
$x,y \in A$ such that $x \neq y$, let $n \in \omega$ be such that
$d'(x,y) = 2^{-n}$ and let $s = \restr{x}{n} = \restr{y}
{n}$. Clearly we have that $h(x) \in C_s$ and $h(y) \in
C_s$. Thus condition \emph{iv)} implies that $d(h(x),h(y)) \leq
2^{-\leng(s)} = 2^{-n} = d'(x,y)$.

If we now assume that $d$ is an ultrametric on $X$ then we can
construct a Lusin scheme $\langle C_s \mid s \in
{}^{<\omega}\omega\rangle$ on $X$
such that
\begin{enumerate}[i)]
\item $C_\emptyset = X$
\item either $C_s =\emptyset$ or $C_s$ is a sphere
\item $C_s = \bigcup_{i \in \omega} C_{s \conc i}$
\item $\diam(C_s) \leq 2^{-\leng(s)}$.
\end{enumerate}

In fact every nonempty $C_s$ (with $s \neq \emptyset$) will be defined
as $C_s =
B\left(x,2^{-\leng(s)} \right)$ for some $x \in X$.

Let $D$ be countable and dense in $X$: we construct the scheme by
induction on $\leng(s)$. First put $C_\emptyset = X$. Suppose to have
constructed $C_s$ with properties \textit{i)-iv)}. If $C_s =
\emptyset$ then put $C_{s\conc i}= \emptyset$ for every $i \in
\omega$, otherwise fix an enumeration $\langle x_k \mid k \in \omega
\rangle$ of $C_s \cap D$. Then define $C_{s \conc 0} = B \left(
  x_0,2^{-(\leng(s)+1)} \right)$ and either $C_{s \conc i+1} = B
\left( x_{k_{i+1}},2^{-(\leng(s)+1)}\right)$, where
$k_{i+1}$ is the smallest $k> k_i$ such that $x_k \notin \bigcup_{j
  \leq i}C_{s \conc j}$, or  $C_{s \conc i+1} =
\emptyset$ if such a $k$ does not exist.

Clearly $C_{s \conc i} \subseteq C_s$ (since $d$ is an ultrametric),
$\diam(C_{s \conc i}) \leq 2^{-(\leng(s)+1)}$
and $C_s = \bigcup_{i \in \omega} C_{s \conc i}$ because $D$ is dense,
hence we are done. Arguing as before, $h$ is a bijection defined on a
 set $A \subseteq \Bai$ and $h \in \Lip(A,X;1)$. Now we want to
show that  $d'(h^{-1}(x),h^{-1}(y)) \leq 2
d(x,y)$ for every distinct $x,y \in X$. Put $S_{x,y} = \{s
\in{}^{<\omega}\omega \mid  x \in C_s
\wedge y \in C_s\}$. Clearly $S_{x,y}$ is linearly ordered and admits
an element $t$ of maximal length (otherwise $x=y$). Thus
$d'(h^{-1}(x),h^{-1}(y))= 2^{-\leng(t)}$. If $d(x,y) <
2^{-(\leng(t)+1)}$ then, by the construction above and the fact
that $d$ is an ultrametric, there would be an $i \in \omega$ such
that $x \in C_{t \conc i}$ and $y \in C_{t \conc i}$, contradicting
the maximality of $t$. Hence $d(x,y) \geq 2^{-(\leng(t)+1)}$
and
\[d'(h^{-1}(x),h^{-1}(y)) = 2^{-\leng(t)} = 2\cdot
2^{-(\leng(t)+1)} \leq 2 d(x,y) \]
as required.

Finally it is not hard to check that the completeness of $d$ implies
that $A$ is a closed set.
\end{proof}

Note that, in particular, this Theorem provides also that every
separable, metrizable and zero-dimensional space is ultrametrizable
(i.e.\ it admits a compatible ultrametric $d$): let $h$ be the
homeomorphism given by the Theorem and simply put $d(x,x') =
d'(h^{-1}(x),h^{-1}(x'))$ for every $x,x' \in X$, where $d'$ is the
  usual (ultra)metric on $\Bai$. Clearly, if $X$ is also Polish we
   have that the ultrametric $d$ is also complete.

For notational simplicity we put $\bSigma^0_0 = \bPi^0_0 = \bDelta^0_0
= \bDelta^0_1$. Moreover, for every countable ordinal $\xi$, we denote
by $\bPi^0_{<\xi}$ (respectively, $\bSigma^0_{<\xi}$ and
$\bDelta^0_{<\xi}$) the pointclass $\bigcup_{\nu<\xi}\bPi^0_\nu$
(resp.\ $\bigcup_{\nu<\xi}\bSigma^0_\nu$ and
$\bigcup_{\nu<\xi}\bDelta^0_\nu$).

\begin{theorem}
Let $X$ be a separable, metrizable, zero-dimensional space and let
$A$ be a subset of $X$. For every nonzero $\xi < \omega_1$ the
following are equivalent:
\begin{enumerate}[i)]
\item $A \in \bSigma^0_\xi$;
\item there is a $\bDelta^0_\xi$-partition of $A$, i.e.\ there is
  $\langle C_n \mid n < N \rangle$  such that for every $n,m < N$ we have
  $C_n \in \bDelta^0_\xi$, $n \neq m \imp C_n \cap C_m =
  \emptyset$, and $A = \bigcup_{n < N} C_n$;
\item there is a $\bPi^0_{<\xi}$-partition of $A$,
  i.e.\ there is $\langle P_n \mid n<N \rangle$ such that for every $n<N$ there is some $\nu_n < \xi$
  with $P_n \in \bPi^0_{\nu_n}$, if $n \neq m$ then $P_n \cap P_m =
  \emptyset$, and $A =\bigcup_{n<N} P_n$.
\end{enumerate}
\end{theorem}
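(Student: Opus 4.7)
My plan is to run the cycle $iii) \imp ii) \imp i) \imp iii)$. The first two implications are essentially formal. For $iii) \imp ii)$, observe that for every $\nu < \xi$ one has $\bPi^0_\nu \subseteq \bDelta^0_{\nu+1} \subseteq \bDelta^0_\xi$ (since $\nu+1 \leq \xi$), so every $\bPi^0_{<\xi}$-partition is automatically a $\bDelta^0_\xi$-partition. For $ii) \imp i)$, a countable disjoint union of $\bDelta^0_\xi$ sets is certainly in $\bSigma^0_\xi$, since $\bDelta^0_\xi \subseteq \bSigma^0_\xi$ and $\bSigma^0_\xi$ is closed under countable unions. The real content is in the implication $i) \imp iii)$, which I would prove by induction on $\xi \geq 1$, using the zero-dimensionality of $X$.

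For the base case $\xi = 1$ I must show that every open $A \subseteq X$ admits a countable clopen partition. Since $X$ is separable, metrizable, and zero-dimensional, it has a countable clopen basis $\{U_n \mid n \in \omega\}$ (this may be read off, for instance, from Theorem \ref{theorpolishspaces}). Then $A$ is the union of those $U_n$ contained in $A$, and the standard disjointification $V_n = U_n \setminus \bigcup_{m<n} U_m$ (restricted to indices with $U_n \subseteq A$) yields clopen pieces, since finite unions and complements of clopen sets are clopen. Discarding empty $V_n$'s gives the desired $\bPi^0_0$-partition.

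For the inductive step, assume the result at all levels $1 \leq \mu < \xi$, and let $A \in \bSigma^0_\xi$, so $A = \bigcup_n P_n$ with $P_n \in \bPi^0_{\nu_n}$ and $\nu_n < \xi$. Replacing $P_n$ by $\bigcup_{m \leq n} P_m$ and $\nu_n$ by $\max_{m \leq n} \nu_m$ (and bumping any $\nu_n = 0$ up to $1$, which is harmless since $\bPi^0_0 \subseteq \bPi^0_1$ and $1 < \xi$), I may assume $P_n$ is increasing and $1 \leq \nu_n < \xi$ is nondecreasing. Setting $P_{-1} = \emptyset$, I write $A = \bigsqcup_n (P_n \setminus P_{n-1})$. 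The piece $P_0$ already lies in $\bPi^0_{\nu_0} \subseteq \bPi^0_{<\xi}$. For $n \geq 1$, $X \setminus P_{n-1} \in \bSigma^0_{\nu_{n-1}}$ with $\nu_{n-1} < \xi$, so the inductive hypothesis provides a $\bPi^0_{<\nu_{n-1}}$-partition $\seq{R^n_k}{k < K_n}$ of $X \setminus P_{n-1}$. Then $P_n \setminus P_{n-1} = \bigsqcup_k (P_n \cap R^n_k)$, and since each $R^n_k$ is in $\bPi^0_\mu$ for some $\mu < \nu_{n-1} \leq \nu_n$, the intersection $P_n \cap R^n_k$ lies in $\bPi^0_{\nu_n} \subseteq \bPi^0_{<\xi}$, using closure of $\bPi^0_{\nu_n}$ under finite intersections. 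Assembling all these pieces yields the required $\bPi^0_{<\xi}$-partition of $A$.

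The main obstacle I expect is the bookkeeping in the inductive step: one has to arrange the $P_n$ so that, when forming the differences $P_n \setminus P_{n-1}$, the newly introduced factor $X \setminus P_{n-1}$ can itself be partitioned at a level \emph{strictly} below that of $P_n$, which is what prevents the intersection from being pushed up to $\bPi^0_\xi$. Zero-dimensionality enters only in the base case; the induction then propagates it through all countable levels.
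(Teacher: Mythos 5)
Your proposal is correct and follows essentially the same route as the paper: the two easy implications are handled identically, the base case uses the same countable clopen decomposition with the standard disjointification, and your inductive step (cumulative unions, writing each difference $P_n\setminus P_{n-1}$ as $P_n\cap(X\setminus P_{n-1})$ and applying the inductive hypothesis to the $\bSigma^0_{\nu_{n-1}}$ set $X\setminus P_{n-1}$) is just a reorganization of the paper's sub-lemma that $Q\cap R$ with $Q\in\bPi^0_\nu$, $R\in\bSigma^0_\nu$ admits a $\bPi^0_\nu$-partition. The only cosmetic difference is that the paper disjointifies first and isolates that sub-lemma explicitly, while you fold it into the main induction.
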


\begin{proof}
The implication $\mathit{iii)} \imp \mathit{ii)}$ is obvious since
every $\bPi^0_\nu$ set is also $\bDelta^0_\xi$ if $\nu < \xi$. Also
$\mathit{ii)} \imp \mathit{i)}$ is easy since every $\bDelta^0_\xi$
set is by definition a $\bSigma^0_\xi$ set and the latter pointclass
is closed under countable unions. Hence we have only to prove
$\mathit{i)} \imp \mathit{iii)}$ and this will be done by induction on $1
  \leq \xi < \omega_1$.

If $\xi = 1$ we have only to note that every open set $U$ can be
written as a countable union of pairwise disjoint clopen sets.
Since $X$ is separable and zero-dimensional we have that $U =
\bigcup_n C_n$ for some sets $C_n \in \bDelta^0_1$: now define
by induction $P_0 = C_0$ and $P_{n+1} = C_{n+1} \setminus
\bigcup_{i\leq n} C_i$ and note that each $P_n$ is clopen (since
$\bDelta^0_1$ is closed under complementation and finite unions
and intersections), $U = \bigcup_n P_n$ and that $n \neq m \imp
P_n \cap P_m = \emptyset$.

If $\xi > 1$ and $S \in \bSigma^0_\xi$, by definition there are
some sets $P_n \in \bPi^0_{\nu_n}$ such that $S = \bigcup_n P_n$
and $\nu_n < \xi$ for all $n \in \omega$. First define inductively
$P'_0 = P_0$ and $P'_{n+1} = P_{n+1} \setminus \bigcup_{i \leq n}
P_i$ and note that they form a partition of $S$. Clearly each
$P'_n$ can be seen as the difference of two $\bPi^0_\nu$ sets
where $\nu = \max\{ \nu_0, \dotsc, \nu_n\} < \xi$  (since
$\nu' \leq \nu \imp \bPi^0_{\nu'}\subseteq \bPi^0_\nu$ and
$\bPi^0_\nu$ is closed under finite unions) and hence we have only
to prove that for all $\nu < \xi$, every set of the form $Q \cap
R$ with $Q \in \bPi^0_\nu$ and $R \in \bSigma^0_\nu$ admits a
$\bPi^0_\nu$ partition. Using the inductive hypothesis, find a
partition $\langle R_n \mid n \in \omega \rangle$ of $R$ such that
$R_n \in \bPi^0_{\mu_n}$ for some $\mu_n < \nu$ and note that  $R_n
\in \bPi^0_\nu$ for
every $n \in \omega$. Then it is easy to
check that the sets $Q_n = Q \cap R_n$ are in $\bPi^0_\nu$ and
that they form a partition of $Q \cap R$, hence we are done.
\end{proof}

In particular, every $\bSigma^0_{\xi+1}$ set admits a
$\bPi^0_\xi$-partition. This implies that every
$\bSigma^0_{\xi+1}$-partition of $X$ can be refined to a
$\bPi^0_\xi$-partition and, more generally, every
$\bSigma^0_\xi$-partition can be refined to a
$\bPi^0_{<\xi}$-partition. Therefore, in the case $X$ is separable,
metrizable and zero-dimensional, we have the following improvement
of Theorem \ref{theorBaireDelta}.

\begin{corollary}
Let $(X,d_X)$ and $(Y,d_Y)$ be two metric separable spaces and
assume that $X$ is also zero-dimensional. Then a function $f: X
\rightarrow Y$ is of Baire class $\xi$ if and only if there is a
sequence of functions converging uniformly to it and such that
each of them is locally constant (respectively, Lipschitz,
continuous) on a $\bPi^0_\xi$-partition of $X$.
\end{corollary}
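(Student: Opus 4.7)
The plan is to derive this as a direct consequence of Theorem \ref{theorBaireDelta} combined with the refinement theorem just proved: in a separable, metrizable, zero-dimensional space, every $\bSigma^0_{\xi+1}$ set admits a $\bPi^0_\xi$-partition (this is the ``In particular'' observation preceding the statement of the corollary, using $\bPi^0_{<\xi+1}=\bigcup_{\nu\leq\xi}\bPi^0_\nu=\bPi^0_\xi$). With these two tools in hand, the corollary reduces to a purely formal transport of the partition refinement through the equivalence of Theorem \ref{theorBaireDelta}.

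The ``if'' direction is essentially free: any $\bPi^0_\xi$-partition is a fortiori a $\bSigma^0_{\xi+1}$-partition, since $\bPi^0_\xi\subseteq\bSigma^0_{\xi+1}$. Hence a sequence converging uniformly to $f$ whose members are locally constant (respectively, Lipschitz, continuous) on $\bPi^0_\xi$-partitions satisfies condition \textit{ii)} (resp.\ \textit{iii)}, \textit{iv)}) of Theorem \ref{theorBaireDelta}, and $f\in\B_\xi(X,Y)$ follows.

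For the ``only if'' direction, I would take $f\in\B_\xi(X,Y)$ and invoke Theorem \ref{theorBaireDelta} to obtain a sequence $\langle f_k\mid k\in\omega\rangle$ converging uniformly to $f$ together with, for each $k$, a $\bSigma^0_{\xi+1}$-partition $\langle S^k_n\mid n<N_k\rangle$ of $X$ on which $f_k$ is locally constant (resp.\ Lipschitz, continuous). Applying the refinement observation to each $S^k_n$ separately, I get a $\bPi^0_\xi$-partition $\langle P^k_{n,m}\mid m<M^k_n\rangle$ of $S^k_n$; gluing these across $n$ gives a $\bPi^0_\xi$-partition of $X$ which refines $\langle S^k_n\mid n<N_k\rangle$. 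The final step uses the trivial but key observation that local agreement with a constant/Lipschitz/continuous function on a partition is inherited by any refinement: if $\restr{f_k}{S^k_n}=\restr{g^k_n}{S^k_n}$ for some $g^k_n$ in the relevant class and $P^k_{n,m}\subseteq S^k_n$, then $\restr{f_k}{P^k_{n,m}}=\restr{g^k_n}{P^k_{n,m}}$. There is no real obstacle here: all the substantive work has already been done in Theorem \ref{theorBaireDelta} and in the preceding refinement theorem, so the argument amounts to assembling these two results.
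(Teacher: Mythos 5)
Your proposal is correct and is exactly the argument the paper intends: the corollary is presented as an immediate consequence of Theorem \ref{theorBaireDelta} together with the observation that, in a separable metrizable zero-dimensional space, every $\bSigma^0_{\xi+1}$-partition refines to a $\bPi^0_\xi$-partition, with local constancy (resp.\ Lipschitzness, continuity) passing to refinements. Nothing is missing.
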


\section{Baire class $1$ and full functions}\label{sectionmain}

Let $\bGamma \subseteq \mathscr{P}(\Bai)$ be a boldface
pointclass, i.e.\ a collection of subsetes of $\Bai$ closed
under continuous preimage. We say that a set $A \in \bGamma$ is
\emph{$\bGamma$-complete} if for every $B \in \bGamma$ there is a
continuous function $f:\Bai \rightarrow \Bai$ such that $B =
f^{-1}(A)$ (such a function will be called a \emph{reduction of $B$
in $A$}).

Recall also that a continuous function from $\Bai$ to $\Bai$ can
be viewed as the function arising from some particular function
$\fhi:{}^{<\omega}\omega \rightarrow {}^{<\omega}\omega$.
We say that  $\fhi:\seqo \rightarrow {}^{<\omega}\omega$ is
\emph{continuous} if  $s \subseteq t \imp
\fhi(s) \subseteq \fhi(t)$ for every $s,t \in \seqo$ and for every $x \in \Bai$
\[\lim_{n \in
\omega} (\leng(\fhi(\restr{x}{n}))) =\infty.\]
If $\fhi$ is
continuous it induces in a canonical way the unique function
\[f_\fhi:\Bai \rightarrow \Bai : x \mapsto \bigcup_{n \in \omega}
\fhi(\restr{x}{n}),\]
and it is not hard to see that $f_\fhi$ is a continuous function.

Conversely, suppose $f:\Bai \rightarrow \Bai$ is continuous. For every
$s \in \seqo$ consider the set $\Sigma_s = \{ t \in
{}^{<\omega}\omega\mid  f(\bN_s) \subseteq \bN_t\}$. Clearly
$\Sigma_s$ is linearly ordered (because if $t$ and $t'$ are
incompatible then $\bN_t \cap \bN_{t'}=\emptyset$), and hence we can
define $\fhi(s) = t_s$ where $t_s \in \Sigma_s$ is such that
$\leng(t_s) = \max\{\leng(t) \mid  \leng(t) \leq \leng(s) \wedge t \in
  \Sigma_s\}$. It is not difficult to check that
  $\fhi:\seqo \rightarrow
  {}^{<\omega}\omega$ is continuous and that $f_\fhi  = f$.

By analogy with the previous definitions,
if $A,B \subseteq \Bai$ and $\fhi:{}^{<\omega}\omega
\rightarrow {}^{<\omega}\omega$ is a continuous function such that
$f_\fhi^{-1}(A) = B$, we call $\fhi$ a \emph{reduction of $B$ into
  $A$} and we say that $\fhi$ \emph{reduces} $B$ to $A$. From the
observation above, it is clear that if $A$ is $\bGamma$-complete
for some pointclass $\bGamma \subseteq \mathscr{P}(\Bai)$ then for
every $B \in \bGamma$ there is a reduction $\fhi:{}^{<\omega}
\omega \rightarrow {}^{<\omega}\omega$ of $B$ in $A$.\\

For every $t,s \in {}^{<\omega}\omega$ define $t-s = \emptyset$ if
$\leng(t) < \leng(s)$, and $t-s = u \in {}^{<\omega}\omega$, where $u$ is such
that $t = (\restr{t}{\leng(s)}) \conc u$, otherwise.

Let $\vec{\fhi} = \langle \fhi_n \mid n<N  \rangle$  be a sequence
of continuous functions $\fhi_n: \seqo \rightarrow {}^{<\omega}\omega$.
Moreover, let
$\langle n_k \mid k \in \omega \rangle$ be an enumeration of $N$
with infinite repetitions such that\footnote{Clearly this last condition
is required only if $N>1$.} $n_k \neq n_{k+1}$ for every $k
\in \omega$. Define $(\vec{\fhi})^* : \seqo \rightarrow
{}^{<\omega}\omega$ and $\sigma: \seqo \rightarrow N$ in the following
way: first put $(\vec{\fhi})^*(\emptyset) = \emptyset$ and
$\sigma(\emptyset) = n_0$. Then suppose to have defined
$(\vec{\fhi})^*(s)$ and $\sigma(s) = n_k$ and inductively put
\[(\vec{\fhi})^*(s \conc i) = (\vec{\fhi})^*(s) \conc 1\] if
$\fhi_{\sigma(s)}(s \conc i ) - (\vec{\fhi})^*(s)$ does not
contain $0$, and \[(\vec{\fhi})^*(s \conc i) = (\vec{\fhi})^*(s)
\conc 0\] otherwise. Finally put $\sigma(s \conc i) = \sigma(s) =
n_k$ in the first case and $\sigma(s \conc i) = n_{k+1}$ in the second
one.

The function $(\vec{\fhi})^*$ is clearly continuous (since it is
constructed extending at each step the previous value and is such
that $\leng((\vec{\fhi})^*(s)) = \leng(s)$ for every $s \in {}^{<
\omega}\omega$) and is called the \emph{$\bSigma^0_2$-control
function}\footnote{The symbol $\bSigma^0_2$ refers to the
$\bSigma^0_2$-sets which are involved in Claim \ref{claimmixing} and
in the other considerations below.} of the sequence $\vec{\fhi}$, while
the function $\sigma$
is the \emph{state function} associated to it. Moreover we will
say that $\sigma(s) \in N$ is the
\emph{state of $s$ with respect to $(\vec{\fhi})^*$}.\\

Consider now a family $A_n \subseteq \Bai$ of $\bSigma^0_2$ sets
(for $n<N$) and $S = \{ x \in \Bai\mid  \exists n \,\forall m
\geq n (x(m) \neq 0) \}$. Since $S$ is
$\bSigma^0_2$-complete  there are continuous functions $\fhi_n :
{}^{<\omega}\omega \rightarrow {}^{<\omega}\omega$ which reduce
$A_n$ to $S$, i.e.\ such that $f_{\fhi_n}^{-1}(S) = A_n$.
Define $\vec{\fhi} = \langle \fhi_n \mid n <N \rangle$ and let
$(\vec{\fhi})^*$ and
$\sigma$ be constructed as above. For notational simplicity we put
$\phi = (\vec{\fhi})^*$. We want to prove the following

\begin{claim}\label{claimmixing}
The function $f_\phi : \Bai \rightarrow \Bai$ is a reduction of
$\bigcup_{n < N} A_n$ in $S$, i.e.\ $f_\phi^{-1}(S) =
\bigcup_{n<N}A_n$ (hence, in particular, $\bSigma^0_2$ is
closed under countable unions). Moreover, $x \in \bigcup_{n<N}
A_n$ if and only if the sequence $\langle \sigma(\restr{x}{k})
\mid k \in \omega \rangle$ is eventually constant, that is
$\exists m \, \forall m' \geq m (\sigma(\restr{x} {m'}) =
\sigma(\restr{x}{m}))$.
\end{claim}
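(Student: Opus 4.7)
The plan is to break the claim into two linked equivalences: first, that $f_\phi(x) \in S$ coincides with the sequence $\seq{\sigma(\restr{x}{k})}{k \in \omega}$ being eventually constant; and second, that this eventual constancy coincides with $x \in \bigcup_{n<N} A_n$. Together these deliver both the equality $f_\phi^{-1}(S) = \bigcup_{n<N} A_n$ and the stated characterisation of the union.

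I start with the first equivalence, which is essentially read off from the construction. By inspection, the bit appended to $\phi(s)$ in forming $\phi(s \conc i)$ is $0$ exactly when the tail condition $\fhi_{\sigma(s)}(s \conc i) - \phi(s)$ contains a $0$, and that is precisely the step at which $\sigma$ moves to its next enumerated value. Hence $f_\phi(x)(k) = 0$ iff $\sigma(\restr{x}{k+1}) \neq \sigma(\restr{x}{k})$, and so $f_\phi(x) \in S$ (only finitely many $0$'s) translates directly to $\sigma(\restr{x}{k})$ being eventually constant.

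For the second equivalence I handle the two directions separately. Suppose first $x \in A_n$, and fix $K_0$ with $f_{\fhi_n}(x)(j) \neq 0$ for every $j \geq K_0$ (using $f_{\fhi_n}(x) \in S$). The key sub-claim is that whenever $\sigma(\restr{x}{k}) = n$ with $k \geq K_0$, the state cannot move at that step: the tail $\fhi_n(\restr{x}{k+1}) - \phi(\restr{x}{k})$ consists of symbols at positions $\geq k \geq K_0$ of $\fhi_n(\restr{x}{k+1})$, which by continuity of $\fhi_n$ is an initial segment of $f_{\fhi_n}(x)$, so those symbols are all nonzero by the choice of $K_0$. Hence once $\sigma$ enters value $n$ at some step $\geq K_0$ it stays there forever. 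To force such an entry I split cases: either $\sigma$ changes only finitely often (in which case it is already eventually constant, and that value must be $n$ by the converse direction below), or $\sigma$ changes infinitely often, and the infinite-repetition condition on $\seq{n_k}{k \in \omega}$ guarantees that $n$ is targeted at arbitrarily late state changes, in particular at some step $\geq K_0$. For the converse, assume $\sigma$ is eventually constant equal to $n$ from step $K$. Then for every $k \geq K$ the tail condition at state $n$ holds, i.e.\ the positions $\geq k$ of the initial segment $\fhi_n(\restr{x}{k+1}) \subseteq f_{\fhi_n}(x)$ contain no $0$; letting $k$ range over $[K, \infty)$ and using $\leng(\fhi_n(\restr{x}{k+1})) \to \infty$, every sufficiently large position of $f_{\fhi_n}(x)$ is certified nonzero, whence $f_{\fhi_n}(x) \in S$ and $x \in A_n$.

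The step I expect to be the main obstacle is precisely the covering argument in this converse direction: one must verify that as $k$ varies over $[K, \infty)$ the windows $[k, \leng(\fhi_n(\restr{x}{k+1})))$ collectively exhaust all large enough positions of $f_{\fhi_n}(x)$. This is where a careful choice of the reductions $\fhi_n$ pays off; it suffices (and is natural, given the usual way one builds continuous reductions to the $\bSigma^0_2$-complete set $S$) to take $\fhi_n$ satisfying $\leng(\fhi_n(s)) = \leng(s)$, so that the tail at step $k$ is just the single symbol $f_{\fhi_n}(x)(k)$ and the covering is immediate.
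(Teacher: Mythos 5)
Your proof is correct and follows the paper's decomposition: first translate $f_\phi(x)\in S$ into eventual constancy of $\seq{\sigma(\restr{x}{k})}{k\in\omega}$ (a $0$ is appended exactly when the state changes), then show eventual constancy is equivalent to $x\in\bigcup_{n<N}A_n$. Inside the second equivalence, however, you deviate in two ways, both improvements. For $x\in A_n\Rightarrow{}$eventual constancy, the paper counts the zeros of $f_{\fhi_n}(x)$ (say $l$ of them) and asserts that by the $(l+1)$-st occurrence of $n$ in the enumeration the state can no longer leave $n$; this count is delicate as written, since a single zero of $f_{\fhi_n}(x)$ sitting at a far position can trigger departures at several successive visits to $n$. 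Your dichotomy (finitely many state changes, or infinitely many, in which case the infinite repetitions force a visit to $n$ at a stage $\geq K_0$, after which your sub-claim pins the state there) sidesteps this entirely. (The parenthetical that the eventual value ``must be $n$'' in the finitely-many-changes case is unnecessary and not quite right --- the state may stabilize at some other $n'$ with $x\in A_{n'}$ --- but nothing depends on it.) More significantly, the covering problem you isolate in the converse direction is a genuine issue that the paper's proof passes over silently: the windows $[k,\leng(\fhi_n(\restr{x}{k+1})))$ need not exhaust the large positions if the reductions lag behind the input. Indeed, if $\leng(\fhi_n(s))<\leng(s)$ for all long $s$, every tail $\fhi_{\sigma(s)}(s\conc i)-\phi(s)$ is empty, the state never changes, and $f_\phi^{-1}(S)=\Bai$ no matter what the $A_n$ are; so the Claim genuinely requires the reductions to be fast enough, and your normalization $\leng(\fhi_n(s))=\leng(s)$ is the right repair. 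It is also harmless: from any continuous reduction $\fhi$ of $A$ to $S$ one obtains a length-preserving one by appending a $0$ at $s\conc i$ exactly when $\fhi(s\conc i)$ contains strictly more zeros than $\fhi(s)$, and a $1$ otherwise. Writing out that one-line normalization is the only thing missing from your sketch.
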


\begin{proof}[Proof of the Claim]
First observe that, by the definition of $\phi$, for every $x \in
\Bai$ we have $f_\phi(x) \in S$ if and only if the sequence $\langle
\sigma(\restr{x}{m})\mid m \in \omega \rangle$ is eventually
constant, since for every $s \in \seqo$ and $i \in \omega$
 we have that $\phi(s \conc i) = \phi(s) \conc 0$
if and only if $\sigma(s \conc i) \neq \sigma(s)$. So it is enough
to prove that $x \in \bigcup_{n<N} A_n \iff \langle
\sigma(\restr{x} {m})\mid m \in \omega \rangle$ is eventually constant.

For every $k \in \omega$ let $o(n_k) = | \{ i \leq k\mid  n_i =
n_k\}|$. Now suppose that $x \in A_n$ for some $n<N$ and let $l =
|\{n \in \omega \mid  f_{\fhi_n}(x)(n) = 0 \}|$. Since $\fhi_n$ is
a reduction of $A_n$ in $S$ we have that $l<\omega$. Let $k$ be
such that $n_k = n$ and $o(n_k) = l+1$. If there is no $m$ such
that $\sigma(\restr{x}{m}) = n_k$ then the sequence of states of
$\restr{x}{i}$ (for $i \in \omega$) with respect to $\phi$ is eventually
constant and we are
done. Otherwise, there are $0<m_0 < m_1< \dotsc < m_{l-1} < m$ such that
$\sigma(\restr{x}{(m_i-1)})=n$ and $\phi(\restr{x}{m_i}) -
\phi(\restr{x}{(m_i-1)})$ contains a $0$ for every $i < l$.
Therefore for every $m' \geq m$ we have that
$\fhi_{n_k}(\restr{x}{(m'+1)}) - \phi(\restr{x} {m'})$ does not
contain any $0$ since $\fhi_{n_k} = \fhi_n$: hence
$\sigma(\restr{x} {m'}) = n_k$ for every $m' \geq m$ and we are
done again.

For the other direction, assume $x \notin \bigcup_{n<N}A_n$. Then
for every $n<N$ and $m \in \omega$ we have that there is an $m'
> m$ such that $\fhi_n(\restr{x}{m'}) - \fhi_n(\restr{x}{m})$ contains
some $0$. This implies that for every
$m \in \omega$ there is an $m'>m$ such that $\sigma(\restr{x}
{m'}) \neq \sigma(\restr{x}{m})$ and thus
$\langle \sigma(\restr{x}{m})\mid m \in \omega \rangle$ is
not eventually constant.
\renewcommand{\qedsymbol}{$\square$ \textit{Claim}}
\end{proof}

With the notation above, if $x \in \bigcup_{n<N} A_n$, we will call
\emph{stabilizing point of $\phi$ on $x$} the natural number

\[m_{x,\phi} = \min \{ m\in \omega\mid \forall m' \geq m
(\sigma(\restr{x}{m'}) = \sigma(\restr{x}{m}))\}.\]
Moreover it is not
difficult to check that in this case $x \in A_{\sigma(t)}$ (where
$t=\restr{x}{m_{x,\phi}}$). In fact, $\fhi_{\sigma(t)}(\restr{x}
{m'})-\phi(\restr{x}{(m'-1)})$ does not contain any $0$ for every
$m' \geq m_{x,\phi}$,  and hence
$f_{\fhi_{\sigma(t)}}(x) \in S$.\\

Now we state and prove a Theorem which is crucial to obtain
Theorem \ref{theorfull}.

\begin{theorem} \label{theorcarLipschitz}
Let $A$ be any subset of $\Bai$ and
$Y$ be a separable metrizable space. If $f\in \B_1(A,Y)$ then
there is a sequence of full functions $f_k: A \rightarrow
Y$ converging pointwise to $f$.
\end{theorem}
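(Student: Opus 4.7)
My plan is to combine the uniform $\bSigma^0_2$-piecewise-constant approximation from Theorem \ref{theorBaireDelta} with the mixing construction of Claim \ref{claimmixing}, and then extract a single pointwise-convergent sequence of full functions.

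\emph{Step 1 (uniform approximation).} Applying the implication $i)\Rightarrow ii)$ of Theorem \ref{theorBaireDelta} with $\xi = 1$ yields a sequence $\langle f_k \mid k \in \omega \rangle$ converging uniformly to $f$, with each $f_k$ locally constant on some $\bSigma^0_2$-partition $\langle Q^k_n \mid n < N_k \rangle$ of $A$ (say with value $z^k_n$ on $Q^k_n$) and with $d_Y(f_k(x), f(x)) \leq 2^{-k}$ for every $x \in A$.

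\emph{Step 2 (full approximations of each $f_k$).} For each $k$, lift the pieces $Q^k_n$ to pairwise-disjoint sets $\hat Q^k_n \in \bSigma^0_2(\Bai)$ (using the reduction property of $\bSigma^0_2$) with $Q^k_n = \hat Q^k_n \cap A$, and pick continuous reductions $\fhi^k_n : \seqo \to \seqo$ of $\hat Q^k_n$ into the $\bSigma^0_2$-complete set $S$. Apply Claim \ref{claimmixing} to $\vec{\fhi^k} = \langle \fhi^k_n \mid n < N_k \rangle$ to obtain the control function $\phi_k$ and the associated state function $\sigma_k$. Define $g_{k,j}(x) = z^k_{\sigma_k(\restr{x}{j})}$ for $x \in A$. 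Since $\sigma_k(\restr{x}{j})$ depends only on $\restr{x}{j}$, $g_{k,j}$ is constant on each ball $\bN_s \cap A$ of radius $2^{-j}$, so its preimages are unions of such balls and are therefore full with constant $2^{-j}$ by Proposition \ref{propfullbasicproperties}; and since $\sigma_k$ takes only finitely many values on ${}^j\omega$ (namely $\{n_0,\dots,n_j\}$ in the notation of Claim \ref{claimmixing}), $g_{k,j}$ has finitely many values. Hence $g_{k,j}$ is a full function. By the stabilisation part of Claim \ref{claimmixing}, for every $x \in A$ the state $\sigma_k(\restr{x}{j})$ eventually equals the unique $n$ with $x \in Q^k_n$, so $g_{k,j}(x) = f_k(x)$ for all $j \geq m_{x,\phi_k}$, whence $d_Y(g_{k,j}(x), f(x)) \leq 2^{-k}$ for such $j$.

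\emph{Step 3 (diagonalisation -- the main obstacle).} It remains to select $j_k$ so that the full functions $h_k := g_{k,j_k}$ converge pointwise to $f$ on all of $A$. This is the \textbf{main obstacle}: the stabilisation depth $m_{x,\phi_k}$ depends on $x$ and is not uniformly bounded on $A$, so the crude diagonal $h_k = g_{k,k}$ may fail at points $x$ where $m_{x,\phi_k}$ grows faster than $k$. A tempting workaround is to enumerate a countable dense set $\{x_i\} \subseteq A$ and set $j_k = \max_{i \leq k} m_{x_i,\phi_k}$ (which is legitimate under $\ACOR$); this guarantees $h_k(x_i) \to f(x_i)$ for every $i$, but since $f$ is only Baire class $1$, a density argument does not automatically transfer this convergence to the rest of $A$. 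The cleaner resolution I anticipate is to replace the separate family $\langle \vec{\fhi^k} \rangle_k$ by a single combined family $\vec{\fhi}$ indexing all pairs $(k,n)$ at once, enumerated so that any stable final state already identifies a piece $Q^{k^*}_{n^*}$ of arbitrarily high level $k^*$; then Claim \ref{claimmixing}, applied just once, produces a single sequence of full functions whose values read off the combined state function converge pointwise to $f$ thanks to the stabilisation and the uniform bound $d_Y(f_k, f)\leq 2^{-k}$.
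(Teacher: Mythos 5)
Steps 1 and 2 of your plan are fine: the functions $g_{k,j}$ are indeed full, and for each fixed $k$ they converge pointwise (in $j$) to $f_k$. The genuine gap is exactly where you flag it, in Step 3, and the resolution you sketch does not close it. If you merge all pairs $(k,n)$ into one family and apply Claim \ref{claimmixing} once, the resulting state function is \emph{eventually constant} on each $x \in A$: it stabilizes at a single pair $(k^*,n^*)$ with $x \in \hat Q^{k^*}_{n^*}$, namely the first index in the cycling enumeration whose reduction stops emitting $0$'s along $x$. There is no way to arrange the enumeration so that the stable state has ``arbitrarily high level'': once stable, the state never advances again, and $k^*$ can perfectly well be $0$. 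Reading values off this state therefore gives convergence to $f_{k^*}(x)$, i.e.\ only a $2^{-k^*}$-approximation of $f(x)$, not pointwise convergence to $f$. Since the dense-set diagonalization also fails (for the reason you yourself give), the argument as written does not go through.

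What is missing is a \emph{nesting} of the partitions, and this is how the paper proceeds. Instead of unrelated partitions $\langle Q^k_n \mid n<N_k\rangle$ for each $k$, the paper builds an open scheme $\langle U_s \mid s \in \seqo\rangle$ on $Y$ with $U_{s \conc i} \subseteq U_s$ and $\diam(U_s) \leq 2^{-\leng(s)}$, takes reductions of the $\bSigma^0_2$ sets coming from $f^{-1}(U_s)$, and for each node $s$ forms a control/state pair $(\psi_s,\sigma_s)$ for the family of its children. The $k$-th approximant reads off a path $s^x_k$ of length $k+1$ by iterating the state functions $\sigma_\emptyset, \sigma_{t_0}, \dotsc$, \emph{all evaluated at depth $k$ of $x$}; this keeps $f_k$ determined by $\restr{x}{k}$ and with finitely many values (hence full), while for each fixed $x$ the successive coordinates of the path stabilize one after another as $k$ exceeds the corresponding stabilizing points, so $f_k(x)$ eventually lies in $U_{t_n}$ for every $n$ and hence converges to $f(x)$. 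Your Step 2 is essentially the depth-one case of this construction; to repair your argument you would have to make the level-$(k+1)$ pieces refine the level-$k$ pieces and chain the state functions along the resulting tree, which is precisely the paper's proof.
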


\begin{proof}
Let $d_Y$ be any compatible metric on $Y$.
Let $\langle U_s\mid s \in {}^{<\omega}\omega \rangle$ be an \emph{open
  scheme on $Y$},
i.e.\ a family of sets $U_s \subseteq Y$ such that for every $s \in
{}^{<\omega}\omega$ we have\footnote{Note that in general this is not a Lusin
  scheme since, in this case, we do not require that if $s$ and $t$ are
  incompatible sequences then $U_s \cap U_t = \emptyset$. However we
  can add this condition if $Y$ is also zero-dimensional.}:
\begin{enumerate}[i)]
\item $U_\emptyset= Y$
\item $U_s$ is open
\item $U_s = \bigcup_{i \in \omega} U_{s \conc i}$
\item $\diam(U_s) \leq 2^{-\leng(s)}$.
\end{enumerate}

One way to do this is to fix some countable dense $D \subseteq Y$
(which exists since $Y$ is separable) and an enumeration\footnote{We
allow repetitions if $Y$ is finite.} $\langle
y_i\mid i \in \omega \rangle$ of it, and then recursively define
$U_\emptyset = Y$ and $U_{s \conc i} = B(y_i,s^{-(\leng(s)+2)})
\cap U_s$. Note that $U_s$ could be the empty set for some $s$,
but the sequences $s$ such that $U_s \neq \emptyset$ form a pruned
tree $R$ on $\omega$: hence for every $s \in R$ we can fix some
$y_s \in U_s$.

Since $f \in \B_1(A,Y)$ and $f^{-1}(U_s) \in \bSigma^0_2(A)$, there
are $V_s \in \bSigma^0_2(\Bai)$ such that $f^{-1}(U_s) = V_s \cap A$
(for every $s \in R$), thus we can consider some reduction
$\tilde{\fhi}_s:\seqo \rightarrow {}^{<\omega}\omega$ of $V_s$
in $S = \{ x \in \Bai\mid \exists n\, \forall m \geq n (x(m) \neq
0)\} \subseteq \Bai$. Moreover, for all these
$s$ we can consider an enumeration without repetitions $\langle
j_i\mid  i < I_s \rangle$ ($I_s \leq \omega$) of the $j \in
\omega$ such that $s \conc j \in R$, and define the sequence of
continuous functions $\vec{\fhi}_s = \langle \fhi_i\mid i < I_s
\rangle$ where $\fhi_i= \tilde{\fhi}_{s \conc
  j_i}$. Finally, let $\psi_s = (\vec{\fhi}_s)^*$ be the
$\bSigma^0_2$-control function of the sequence $\vec{\fhi}_s$, and
$\sigma_s$ be the state function associated to it.

We are now ready to define the functions $f_k$. Fix some $k \in
\omega$ and for every $x \in A$ inductively define for $i < k$:
\[ \begin{cases}
s^{x,k}_0 = \langle \min \{\sigma_\emptyset(\restr{x}{k}),k\} \rangle &\\
s^{x,k}_{i+1} = t_i \conc \min\{\sigma_{t_i}(\restr{x}{k}),k\},&
\end{cases} \]
where $t_i = s^{x,k}_i$.
For notational simplicity, for every $n \in \omega$ we will put
$s^x_n = s^{x,n}_n$. Note that by definition of $\vec{\fhi}_s$ one
can easily prove by induction on $i \leq k$ that $s^x_k \in R$ for
every $x \in A$: hence we can define
\[ f_k : A \rightarrow Y : x \mapsto y_{s^x_k}.\]

\begin{claim}
For every $k \in \omega$ the function $f_k$ is full with constant $k$.
\end{claim}

\begin{proof}[Proof of the Claim]
It is clear that $s^x_k \in {}^{k+1}(k+1)$ for every $x \in A$, thus $f_k$
has at most $(k+1)^{k+1}$ values. Moreover, these values  depend
only on the sequence $\restr{x}{k}$, hence the preimage of each of them
 is a union of balls with radius $2^{-k}$ (and hence is a full
set by Proposition \ref{propfullbasicproperties}).
\renewcommand{\qedsymbol}{$\square$ \textit{Claim}}
\end{proof}

\begin{claim}
The sequence $\langle f_k\mid k \in \omega \rangle$ converges to
$f$ pointwise.
\end{claim}

\begin{proof}[Proof of the Claim]
Fix some $x \in A$ and $n \in \omega$. We want to prove that
there is an $m \in \omega$ such that $\forall m' \geq m \,(
d_Y(f_{m'}(x),f(x)) \leq 2^{-(n+1)})$.

We define inductively a sequence $\langle m_j\mid  j \leq n
\rangle$ of natural numbers (the sequence of the stabilizing points of
$x$) and a sequence $\langle t_j \mid j \leq n \rangle$ of compatible
and length increasing sequences of natural numbers. First
put $m_0 = m_{x,\psi_\emptyset}$ and $t_0 = \langle \sigma_\emptyset
(\restr{x}{m_0}) \rangle$: then for every $i<n$ define $m_{i+1}
= m_{x,\psi_{t_i}}$ and $t_{i+1} = t_i \conc
\sigma_{t_i}(\restr{x}{m_{i+1}})$. Finally  put $t_{-1} =\emptyset$
by definition.

Now recall that, by Claim \ref{claimmixing} and the observations
following it, if $f(x) \in \bigcup_{m \in \omega} U_{s\conc m}$ then
$f(x) \in U_{s \conc \sigma_s(\restr{x}{m})}$ for every $m \geq
m_{x,\psi_s}$. Therefore the fact that $f(x) \in Y$ implies that $f(x) \in
U_{t_0}$. Moreover, using the same argument, one can show that since
$f(x) \in U_{t_i}$ then $f(x) \in U_{t_{i+1}}$ for every $i<n$, hence
we have $f(x)
\in U_{t_n}$.

Recall also that, by definition of the numbers $m_i$,
\[ \forall m' \geq
m_i \;( t_{i} = t_{i-1} \conc \sigma_{t_{i-1}}(\restr{x}{m'})).\]
Let $m = \max\{m_0, \dotsc, m_n,n,k \}$, where $k$ is the smallest natural
number such that $t_n \in {}^{<\omega} k$. Again  by
induction  on $i
\leq n$, it is not hard to prove that for every $m' \geq m$ and
every $i \leq n$ we have $s^{x,m'}_i = t_i$ and hence $s^x_{m'}
\supseteq t_n$. Since we have that $f_{m'}(x)=y_{s^x_{m'}}
\in U_{s^x_{m'}} \subseteq U_{t_n}$ (by $s^x_{m'} \supseteq t_n$), $\leng(t_n)
= n+1$, and
$\diam(U_{t_n}) = 2^{-\leng(t_n)}$, we can conclude that
$d_Y(f_{m'}(x),f(x)) \leq 2^{-(n+1)}$ and we are done.
\renewcommand{\qedsymbol}{$\square$ \textit{Claim}}
\end{proof}
\end{proof}

We are now ready to prove the characterization of the Baire class
$1$ functions as pointwise limits of full functions, i.e.\
Theorem \ref{theorfull}.
\newline

\begin{proof}[Proof of Theorem \ref{theorfull}]
Since every
full function is Lipschitz and every Lipschitz function is continuous,
if $f$ is the pointwise  limit of
a sequence of full functions then it is in $\B_1(X,Y)$ (see note
\fullref{note}).\\

For the other direction, let $X$ and $Y$ be as in the hypotheses
of the Theorem, and let $A \subseteq \Bai$ and $h:A \rightarrow X$ be
obtained applying the second part of Theorem
\ref{theorpolishspaces} to $X$. Define
\[g = f \circ h : A \to Y.\]
Since  $h$ is continuous, if $f$ is of Baire class $1$ then also
$g$ is of Baire class $1$. Let $g_n : A \rightarrow Y$ be the
sequence of full functions convergent (pointwise) to $g$ that
comes from Theorem \ref{theorcarLipschitz}, and define for every $n
\in \omega$
\[ f_n = g_n \circ h^{-1} : X \to Y.\]

Clearly each $f_n$ is a full function by Proposition
\ref{propfullLipschitz}, and moreover $f$ is the pointwise limit of
the sequence
$\langle f_n\mid n \in \omega \rangle$: in fact for every $x \in
X$ and every $n \in \omega$ we have that $d(f_m(x),f(x)) =
d(g_m(h^{-1}(x)),g(h^{-1}(x))) \leq 2^{-n}$ for $m$ large enough
(since $g_m \rightarrow g$ pointwise). This completes the proof.
\end{proof}

\end{document}